\DeclareFontFamily{T1}{pzc}{}
\DeclareFontShape{T1}{pzc}{m}{it}{<-> [1.00] pzcmi8t}{}
\DeclareMathAlphabet{\mathpzc}{T1}{pzc}{m}{it}
\DeclareMathOperator{\tr}{tr}
\DeclareMathOperator{\sgn}{sgn}
\newcommand{\nn}{\nonumber}
\renewcommand{\leq}{\le}
\renewcommand{\geq}{\ge}
\newcommand{\Let}{\coloneqq}
\def\bmat{\left[ \begin{array}}
\def\emat{\end{array} \right]}
\newcommand{\spectrum}{{\sigma}}			
\DeclareMathOperator{\jac}{Jac}				
\DeclareMathOperator{\sym}{Sym}				
\DeclareMathOperator{\skw}{Skew}			
\newcommand{\orth}{{\mathcal{O}}}			
\newcommand{\D}{{\mathcal{D}}}				
\DeclareMathOperator{\Mj}{M_J}				
\DeclareMathOperator{\Ms}{M}				
\definecolor{mred}{rgb}{0.6, 0, 0}
\definecolor{mgreen}{rgb}{0, 0.5, 0}
\definecolor{mblue}{rgb}{0, 0, 0.5}
\definecolor{mcyan}{rgb}{0, 0.5, 0.5}
\newcommand{\R}{\ensuremath{\mathbb{R}}}
\newcommand{\N}{\ensuremath{\mathbb{N}}}
\newcommand{\Nz}{\ensuremath{\mathbb{N}_0}}
\newcommand{\lra}{\ensuremath{\longrightarrow}}
\renewcommand{\le}{\ensuremath{\leqslant}}
\renewcommand{\ge}{\ensuremath{\geqslant}}
\renewcommand{\mapsto}{\ensuremath{\longmapsto}}
\newcommand{\norm}[1]{\ensuremath{\left\lVert #1 \right\rVert}}
\newcommand{\inprod}[2]{\ensuremath{\left\langle{#1}\vphantom{\big|},\vphantom{\big|}{#2}\right\rangle}}
\newcommand{\secref}[1]{\S\ref{#1}}
\newcommand{\transp}{\ensuremath{^{\scriptscriptstyle{\top}}}}
\newcommand{\drv}{\ensuremath{\,\mathrm{d}}}
\newtheoremstyle{nonum}{4pt}{4pt}{}{}{\itshape}{.}{ }{\thmname{#1}\thmnote{ (\mdseries #3)}}
\theoremstyle{nonum}
\newtheorem{remarksnn}{Remarks and contributions}
\newtheoremstyle{nonumt}{4pt}{4pt}{\slshape}{}{\bfseries}{.}{ }{\thmname{#1}\thmnote{ (\mdseries #3)}}
\theoremstyle{nonumt}
\numberwithin{equation}{section}
\newtheoremstyle{dcstyle}{4pt}{4pt}{\itshape}{}{\bfseries}{.}{ }{}
\theoremstyle{dcstyle}
\newtheorem{theorem}[equation]{Theorem}
\newtheorem{lemma}[equation]{Lemma}
\newtheorem{proposition}[equation]{Proposition}
\theoremstyle{definition}
\theoremstyle{remark}
\newtheorem{example}[equation]{Example}
\def\tagform@#1{\maketag@@@{\ignorespaces#1\unskip\@@italiccorr}}
\newcommand{\kreis}[1]{\unitlength1ex\begin{picture}(2.5,2.5)%
\put(0.75,0.75){\circle{2.5}}\put(0.75,0.75){\makebox(0,0){#1}}\end{picture}}
\newcommand{\Lp}[1]{\mathrm{L}_{#1}}
\newcommand{\Jacobi}{\mathcal{J}}
\title[Isospectral flows on a class of Jacobi matrices]{Isospectral flows on a class of finite-dimensional Jacobi matrices}
\author[T.\ Sutter]{Tobias Sutter}
\address[T.~Sutter, and J.~Lygeros]{Automatic Control Laboratory, ETL I28, ETH Z\"urich, Physikstrasse 3, 8092 Z\"urich, Switzerland}
\email[T.~Sutter and J.~Lygeros]{suttert@student.ethz.ch, jlygeros@control.ee.ethz.ch}
\urladdr{\url{http://control.ee.ethz.ch}}
\author[D.\ Chatterjee]{Debasish Chatterjee}
\address[D.~Chatterjee]{Systems \& Control Engineering\\ IIT-Bombay, Powai\\ 400~076 Mumbai\\ India}
\email[D.~Chatterjee]{chatterjee@sc.iitb.ac.in}
\urladdr{\url{http://www.sc.iitb.ac.in/~chatterjee}}
\author[F.\ A.\ Ramponi]{Federico A. Ramponi}
\address[F.\ A.\ Ramponi]{Universit\`a degli Studi di Brescia, Dipartimento di Ingegneria dell'Informazione \\ Via Branze 38, 25123 Brescia, Italy}
\email[F.\ A.\ Ramponi]{federico.ramponi@ing.unibs.it}
\author[J.\ Lygeros]{John Lygeros}
\thanks{This research was partially supported by the European Commission under the project MoVeS (FP7-ICT-2009.3.5), and the HYCON2 Network of Excellence (FP7-ICT-2009-5).}
\keywords{Isospectral flows; Kac-van Moerbeke flow; Jacobi matrices; Linear algebra; Krasovskij-LaSalle invariance principle}
\begin{document}

\begin{abstract}
We present a new matrix-valued isospectral ordinary differential equation that asymptotically block-diagonalizes $n\times n$ zero-diagonal Jacobi matrices employed as its initial condition. This o.d.e.\ features a right-hand side with a nested commutator of matrices, and structurally resembles the double-bracket o.d.e.\ studied by R.W.\ Brockett in 1991. We prove that its solutions converge asymptotically, that the limit is block-diagonal, and above all, that the limit matrix is defined uniquely as follows: For $n$ even, a block-diagonal matrix containing $2\times 2$ blocks, such that the super-diagonal entries are sorted by strictly increasing absolute value. Furthermore, the off-diagonal entries in these $2\times 2$ blocks have the same sign as the respective entries in the matrix employed as initial condition. For $n$ odd, there is one additional $1\times 1$ block containing a zero that is the top left entry of the limit matrix. The results presented here extend some early work of Kac and van Moerbeke.
\end{abstract}

	\maketitle
	
	\section{Introduction and Main Result}
	\label{s:intro}
		The tasks of sorting a list, diagonalizing a matrix, and solving a linear programming problem are traditionally solved with computer science algorithms, for example the quicksort algorithm for sorting or the simplex method for solving linear programs. Brockett \citep{ref:Brockett-91} showed that solutions to such problems can also be obtained by means of a smooth dynamical system, in particular as the limit of  solutions to certain matrix-valued ordinary differential equations (o.d.e.'s). A classical problem from linear algebra is therefore solvable by calculus. Motivated by Brockett's work, new problems, conventionally tackled by algebraic methods, have been assigned to calculus. For instance, \citep{ref:Faybusovich-92} proposed an ordinary differential equation (structurally similar to the one proposed by Brockett) as the starting point in a general approach to interior point methods for linear programming.


By a \emph{Jacobi matrix} we mean a symmetric tridiagonal matrix (in general, infinite) with real entries and distinct eigenvalues. 
In this article we present a matrix-valued ordinary differential equation which asymptotically block-diagonalizes a finite-dimensional zero-diagonal Jacobi matrix taken as its initial condition.  Jacobi matrices arise in a variety of applications, for example in solid state physics to characterize the Toda lattice, which is a simple model for a one dimensional crystal---see e.g. \citep{ref:Moser-75}, \citep[pp.\ 59-60]{helmke} for a detailed study. 
There is also a strong connection between Brockett's double bracket flow \citep{ref:Brockett-91} and the Toda lattice equation, which was first observed by \citep{Bloch-90}. 

		We offer a second motivation here that has intrinsic appeal and interest, relating to the computation of the roots of certain polynomials. Orthogonal polynomials on the real line corresponding to a Borel probability measure $\mu$ have considerable applications in mathematical physics and engineering \citep{Simon-05}. Let $\inprod{\cdot}{\cdot}$ denote the standard inner product on the Hilbert space $\Lp 2(\R, \mu)$. Then a sequence of monic orthogonal polynomials on the real line is defined recursively \citep{Szego} by 
		\begin{equation*}
				\begin{aligned}
					& P_{n+1}(x) = x P_n(x) - a_n^2 P_{n-1}(x) - b_{n+1} P_n(x),\quad n\in\N,\\
					& P_{-1}(\cdot) = 0,\quad P_0(\cdot) = 1,
				\end{aligned}
		\end{equation*} where
		\[
		\begin{aligned}
			a_n		& \Let \frac{\norm{P_{n}}}{\norm{P_{n-1}}},\text{ for }n\in\N \text{ and }
			b_{n+1}	\Let \frac{\inprod{xP_n}{P_n}}{\norm{P_{n}}^2}\text{ for } n\in\Nz.
		\end{aligned}
		\]
		To a given measure $\mu$, there corresponds a (generally infinite) Jacobi matrix
		\begin{equation} \label{e:jacobi:inf:dim} 
		\Jacobi \Let
		\begin{pmatrix}
			b_1		& a_1		& 0			& 0			& \cdots\\
			a_1		& b_2		& a_2		& 0			& \cdots\\
			0		& a_2		& b_3		& a_3		& \cdots\\
			0		& 0			& a_3		& b_4		& \cdots\\
			\vdots	& \vdots	& \vdots	& \vdots	& \ddots\\
		\end{pmatrix}
		\end{equation}
		with strict positive off-diagonal entries derived from its orthogonal polynomials, and it is well-known that the zeros of these orthogonal polynomials are precisely the eigenvalues of finite truncations of this Jacobi matrix. Conversely, Favard's Theorem \citep{ref:Favard-35} shows that  to every finite-dimensional symmetric tridiagonal matrix with strictly positive off-diagonal entries there corresponds a finitely supported measure. (The uniqueness of this measure for an infinite-dimensional Jacobi matrix is an issue that relates to the solvability of the ``moment problem'' \citep{ref:Akh-65}, with which we shall not deal here.) Since the space of square integrable functions corresponding to this measure is finite dimensional (the measure itself being finitely supported), it is enough that from the orthogonal polynomials it is possible to recover the measure, and this leads to the problem of finding the roots of such polynomials. The latter, in general, is known to be a difficult task. As mentioned above, 
for our applications it suffices to determine the eigenvalues of the finite truncations of the corresponding Jacobi matrix, which we shall do in this article with the aid of an appropriate matrix-valued ordinary differential equation. For this we further specialize the measures to non-negative linear combinations of finitely many Dirac measures on the real line placed symmetrically around $0$. These measures give rise to zero-diagonal Jacobi matrices, and our main result to the problem of finding the roots of the orthogonal polynomials corresponding to these measures.
		
		As a concrete application we consider the Gaussian quadrature method. For a given positive weight function $w(x)$ on an interval $[a,b]$ of the real line, the $n$-point Gaussian quadrature rule approximates an integral by
		\begin{equation*}
		 \sum_{i=1}^n w_i f(x_i) \approx \int_{a}^b f(x) w(x) \drv t.
		\end{equation*}
		It is known (see \cite[p.\ 17]{Simon-05}, \cite[p.\ 351]{Szego}, \cite[p.\ 21]{ref:watkins-05}) that the optimal points $x_1,\hdots,x_n$ coincide with the zeros of the orthogonal polynomial $P_n(x)$ introduced above, when we use the probability measure $\mu=\frac{w(x)\drv x}{\int_a^b w(x) \drv x}$, with $\drv x$ being the Lebesgue measure. As mentioned, these zeros are precisely the eigenvalues of a certain $n \times n$ truncation of the Jacobi matrix in \eqref{e:jacobi:inf:dim}. Furthermore, if the interval $[a,b]$ is symmetric about $0$ and the weight function $w(x)$ is even, the resulting Jacobi matrix has zero diagonal entries.

		In this article, we treat the problem of obtaining the eigenvalues of zero-diagonal finite dimensional Jacobi matrices from the asymptotic limit of a smooth dynamical system. Preparatory to stating our main result, we need some preliminary notation: We let $\sym(n)$ and $\skw(n)$ denote respectively the set of symmetric and skew-symmetric $n\times n$ matrices with real entries. We define $\jac_0(n)$ as the set of all $n\times n$ Jacobi matrices with real entries and zeros on its diagonal.
		For a matrix $A$, $\norm{A}$ is the Frobenius norm defined as $\norm{A} = \sqrt{\tr (A A^\top)}$. Let $\mathcal{C}(\R_{\geq 0},\R)$ denote the set continuous functions from $\R_{\geq 0}$ to $\R$. The bracket $[\cdot , \cdot ]$ is the usual matrix commutator $[A,B] = AB - BA$. For $H_0\in \sym(n)$ we let $\Ms(H_0) \Let \{\Theta\transp H_0 \Theta\mid \Theta\in \orth(n,\R)\}$ denote the set of all real matrices orthogonally similar to $H_0$ \citep{moorePaper}. For $H_0\in\jac_0(n)$ we define $\Mj(H_0)$ to be the set of all zero-diagonal Jacobi matrices that are isospectral  to $H_0$ (that is, they have the same eigenvalues). Moreover, for $p = 0, 1, \ldots, n-1$, we let
		\begin{align*}
		\D_{u,p}
		(a_1,a_2,\hdots,a_{n-p})
		&\Let 
				\begin{pmatrix}
					0		& \hdots		&0			& a_1			& 			& 		& \\
							& 0			& \hdots		&0			& a_2			& 		& \\
							& 			& \ddots		&			&			& \ddots	& \\
							&			&			& 			&			& 0		& a_{n-p}\\
							&			&			& 			& 			&		& 0\\
							&			&			& 			& 			&		& \vdots\\
							& 			&			& 			& 			&		& 0
				\end{pmatrix}\in\R^{n\times n}
				\end{align*}
				and
				\begin{align*}
		\D_{p}(a_1,a_2,\hdots,a_{n-p}) &\Let  \D_{u,p}(a_1,a_2,\hdots,a_{n-p}) +\D_{u,p} (a_1,a_2,\hdots,a_{n-p})\transp.
		\end{align*}

		The following is our main result:
		\begin{theorem} \label{t:mainTheoremKvM}
			Let $n$ be a positive integer. Consider the zero-diagonal Jacobi matrix
			\begin{equation}
				H =
					\begin{pmatrix}
						0 	& 	a_1 	& 		& 		&	 	& 		& 		&  	\\
						a_1 	& 	0 	& 	a_2 	& 		& 		& 		& 		& 	\\
							& 	a_2 	&  		& 		& 		& 		&		&	\\
							&     		&  		&	\ddots	& 		& 		& 		&	\\
							&  		&  		& 		& 		&	a_{n-2} & 		&	\\
							&  		&  		& 		&	a_{n-2} &	0 	&	a_{n-1} &	\\
							&  		&  		& 		&	0 	&	a_{n-1} &	0	&	\\
					\end{pmatrix} \in \jac_0(n),
			\end{equation}
			and the skew-symmetric matrix
			\begin{equation}
			\label{Kac_K(H)}
			\begin{aligned}
				K(H)\Let &\D_{u,2}(a_1a_2, a_2a_3,\cdots,a_{n-2}a_{n-1})\\
					& \qquad -\D_{u,2}(a_1a_2,a_2a_3,\cdots,a_{n-2}a_{n-1})\transp,
			\end{aligned}
			\end{equation}
			derived from $H$. Consider the matrix-valued o.d.e.\
			\begin{equation}
			\label{e:chTwo:KvMflow}
			\begin{aligned}
				\frac{\drv}{\drv t}H(t) & =[H(t),K(H(t))],
			\end{aligned}
			\qquad H(0) \Let H_0\in \jac_0(n).
			\end{equation} 
			\begin{enumerate}[label={\rm (\roman*)}, leftmargin=*, align=right, widest=iii]
				\item \label{cond:KvM:isospectral} \eqref{e:chTwo:KvMflow} defines an isospectral flow on the set of all Jacobi matrices with zero diagonal entries.
				\item \label{cond:KvM:eqpoints} The solutions $H(t)$ of \eqref{e:chTwo:KvMflow} exist for all $t\ge 0$ and approach  asymptotically the set of equilibrium points of \eqref{e:chTwo:KvMflow}, which  is given by 
					\[
						\bigl\{\bar{H}\in \Mj(H_0)\,\big|\, K(\bar{H})=0\bigr\}.
					\]
				\item \label{cond:KvM:convergence} $\lim_{t\to\infty}H(t)$ exists. 
				\item \label{cond:KvM:sorting}If $n$ is even, then
				  for all initial conditions $H_0\in\jac_0(n)$ other than the equilibria, where $H_0= \D_1(a_1,\hdots,a_{n-1})$ and $\spectrum(H_0)=\left \{ \pm \lambda_1,\hdots,\pm \lambda_{\frac{n}{2}}  \right \}$ with $|\lambda_1|<|\lambda_2|<\hdots <|\lambda_{\frac{n}{2}}|$, the solution $H(t)$ of \eqref{e:chTwo:KvMflow} converges to 
				  \begin{equation*}
				      \lim_{t\to\infty} H(t) = \D_1 \left(\sgn(a_1)|\lambda_1|, 0,\sgn(a_3)|\lambda_2|,0,\hdots,\sgn(a_{n-1})|\lambda_{\frac{n}{2}}|\right).
				  \end{equation*}
				   If $n$ is odd, then
				    for all initial conditions $H_0\in\jac_0(n)$ other than the equilibria, where $H_0= \D_1(a_1,\hdots,a_{n-1})$ and $\spectrum(H_0)=\left \{ 0,\pm \lambda_1,\hdots,\pm \lambda_{\frac{n-1}{2}}  \right \}$ with $0<|\lambda_1|<|\lambda_2|<\hdots <|\lambda_{\frac{n-1}{2}}|$, the solution $H(t)$ of \eqref{e:chTwo:KvMflow} converges to 
				    \begin{equation*}
				      \lim_{t\to\infty} H(t) = \D_1 \left(0,\sgn(a_2)|\lambda_1|, 0,\sgn(a_4)|\lambda_2|,0,\hdots,\sgn(a_{n-1})|\lambda_{\frac{n-1}{2}}|\right).
				  \end{equation*}
			\end{enumerate}
		\end{theorem}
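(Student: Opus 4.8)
The plan is to turn the matrix equation into an explicit scalar system, dispose of (i)--(iii) by a Lax-pair plus Krasovskij--LaSalle argument, and then identify the precise limit in (iv) by a sign-preservation identity together with a transversal-stability analysis of the equilibria. For part (i), I would compute $[H,K(H)]$ entrywise. Since $H$ carries entries only on the first off-diagonals and $K(H)$ only on the second off-diagonals, $HK(H)-K(H)H$ can a priori populate the diagonals at distances $1$ and $3$; a direct check shows that the main diagonal and the distance-$3$ entries cancel, so $[H,K(H)]$ is again a zero-diagonal tridiagonal matrix and $\jac_0(n)$ is invariant. Writing $h_{i,i+1}=a_i$ with the convention $a_0=a_n=0$, the surviving entries give the Kac--van Moerbeke system $\dot a_i = a_i\bigl(a_{i-1}^2-a_{i+1}^2\bigr)$. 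Because $K(H)\in\skw(n)$, the flow is a Lax equation, so $H(t)=\Theta(t)\transp H_0\,\Theta(t)$ for the orthogonal $\Theta(t)$ solving $\dot\Theta=K(H)\Theta$, which proves isospectrality, i.e.\ $H(t)\in\Mj(H_0)$.

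For parts (ii) and (iii), isospectrality gives $\norm{H(t)}^2=\tr H(t)^2=\sum_k\lambda_k^2$ constant, so the trajectory stays in a compact set and exists for all $t\ge 0$. I would then use the Lyapunov function $V(H)=\tr(NH^2)$ with $N=\diag(1,2,\dots,n)$: a short computation from the scalar system yields $\dot V = 2\sum_i(\mu_{i+1}-\mu_{i-1})\,a_{i-1}^2a_i^2\ge 0$, with equality exactly when every product $a_{i-1}a_i$ vanishes, that is, when $K(H)=0$. The Krasovskij--LaSalle invariance principle then forces $H(t)$ to approach the equilibrium set $\{\bar H\in\Mj(H_0)\mid K(\bar H)=0\}$, giving (ii). For (iii), the condition $K(\bar H)=0$ means no two consecutive off-diagonal entries are nonzero, so every equilibrium in $\Mj(H_0)$ is block-diagonal with $2\times2$ blocks $\left(\begin{smallmatrix}0&a\\a&0\end{smallmatrix}\right)$ and at most one $1\times1$ zero block; such matrices are determined by a choice of non-consecutive active slots, an assignment of the magnitudes $|\lambda_k|$ to blocks, and a choice of signs, hence are finite in number. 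The $\omega$-limit set of the bounded trajectory is nonempty, compact and connected, and being contained in a finite set it must be a single point, so $\lim_{t\to\infty}H(t)$ exists.

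For part (iv), the sign claim is immediate: integrating $\dot a_i=a_i(a_{i-1}^2-a_{i+1}^2)$ gives $a_i(t)=a_i(0)\exp\!\int_0^t(a_{i-1}^2-a_{i+1}^2)\,\drv s$, so $\sgn a_i(t)=\sgn a_i(0)$ for all $t$, and hence for the surviving entries of the limit. When $n$ is even the active slots are forced: the spectrum has $n/2$ nonzero pairs, so the limit needs $n/2$ non-consecutive nonzero entries among $a_1,\dots,a_{n-1}$, and the only such pattern is $\{1,3,\dots,n-1\}$. The ordering of the magnitudes, and (for odd $n$) the selection of the even active slots, I would extract from a linearisation at each equilibrium transverse to the equilibrium set: a vanishing gap entry $a_j$ between two active blocks evolves as $\dot a_j = a_j\bigl(a_{j-1}^2-a_{j+1}^2\bigr)$, whose linear coefficient at the equilibrium is $a_{j-1}^2-a_{j+1}^2$. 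This direction is contracting precisely when $a_{j-1}^2<a_{j+1}^2$; demanding contraction at every gap, including the boundary gaps governed by $\tfrac{\drv}{\drv t}\log|a_1|=-a_2^2\le0$ and $\tfrac{\drv}{\drv t}\log|a_{n-1}|=a_{n-2}^2\ge0$, produces a chain of strict inequalities that forces the magnitudes to increase down the diagonal and selects the even slots in the odd case---exactly the configurations in the statement. Any other equilibrium has a gap with $a_{j-1}^2>a_{j+1}^2$, hence an expanding transverse direction, so its stable set consists only of trajectories with $a_j\equiv0$, which are themselves equilibria; therefore no non-equilibrium trajectory converges to it, and by (iii) the limit must be the asserted sorted matrix with signs inherited from $H_0$.

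The main obstacle is part (iv). Parts (i)--(iii) are the standard Lax-plus-LaSalle package, and the Lyapunov function only certifies that $V$ increases, not that the limit is its maximiser; LaSalle by itself yields nothing beyond membership in the equilibrium set. The decisive and most delicate step is the global statement that every equilibrium other than the magnitude-sorted one attracts no non-equilibrium initial condition, its stable set being confined to the equilibrium set itself. Establishing this uniformly in $n$---rather than verifying small cases---requires controlling the transverse linearisation at all the combinatorially many equilibria simultaneously, and this is where the bookkeeping of the gap coefficients $a_{j-1}^2-a_{j+1}^2$, or alternatively a reduction to the sorting property of the Toda flow induced on the decoupled even/odd blocks of $H^2$, will do the real work.
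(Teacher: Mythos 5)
Your plan for (i)--(iii) is sound and essentially coincides with the paper's proof: the entrywise reduction to $\dot a_i=a_i(a_{i-1}^2-a_{i+1}^2)$ with $a_0=a_n=0$, the Lax-pair isospectrality, compactness of the isospectral set for global existence, a Lyapunov function of the form $\sum_i c_i a_i^2$ with linearly increasing weights (the paper's $-\frac14\|H-N(H)\|^2+\frac14\|N(H)\|^2$ reduces to exactly such a sum, so your $\tr(NH^2)$ is the same device), Krasovskij--LaSalle, and finiteness of the equilibrium set combined with connectedness of the $\omega$-limit set. The sign-preservation part of (iv) is also fine. The genuine gap is in the mechanism you propose for the sorting in (iv) --- the step you yourself flag as decisive. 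All equilibria of this flow are non-hyperbolic: in the $a$-coordinates the linearization at an equilibrium is diagonal with entries $a_{i-1}^2-a_{i+1}^2$, which vanish at every active slot, so the stable/center-manifold machinery behind ``an expanding transverse direction implies the stable set is thin'' does not apply off the shelf (the paper explicitly notes that Brockett's stable-manifold route is unavailable here, and you give no substitute). Worse, your supporting claim that the stable set of a bad equilibrium ``consists only of trajectories with $a_j\equiv 0$, which are themselves equilibria'' is false: for $n\ge 5$ the invariant set $\{a_j=0\}$ contains many non-equilibrium trajectories (e.g.\ $a_2\equiv 0$ with $a_3,a_4\neq 0$ gives $\dot a_3=-a_3a_4^2\neq 0$), so even granting the confinement you would still need an induction on these sub-flows that you have not set up.

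The repair is elementary and is exactly what the paper does with the formula you already wrote down. If a gap entry satisfies $a_j(0)\neq 0$ and $a_j(t)\to 0$, then $a_j(t)=a_j(0)\exp\bigl(\int_0^t(a_{j-1}^2-a_{j+1}^2)\drv s\bigr)$ forces $\int_0^t(a_{j-1}^2-a_{j+1}^2)\drv s\to-\infty$. On the other hand, by (iii) the integrand converges, and its limit equals a difference of squares of two distinct eigenvalue magnitudes, hence has strictly positive absolute value. A one-line real-analysis lemma (the paper's Lemma \ref{lemma:sorting}) then forces the limit of the integrand to be strictly negative, i.e.\ $\lim_{t\to\infty}a_{j-1}^2(t)<\lim_{t\to\infty}a_{j+1}^2(t)$. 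Applying this to every gap identifies the limit as the magnitude-sorted equilibrium directly, with no case analysis over the combinatorially many equilibria and no linearization. For $n$ odd, your observation that $\frac{\drv}{\drv t}|a_{n-1}|^2=2a_{n-1}^2a_{n-2}^2\ge 0$ (so $a_{n-1}$ cannot die if $a_{n-1}(0)\neq 0$) is precisely how the paper pins the $1\times 1$ zero block to the top-left corner; use it directly rather than as a boundary case of the linearization.
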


\medskip

		\begin{remarksnn}\mbox{}
			\begin{enumerate}[label=\arabic*., leftmargin=*, align=left, widest=3]
				 \item The assertions $\ref{cond:KvM:isospectral}-\ref{cond:KvM:convergence}$ imply that the o.d.e.\ \eqref{e:chTwo:KvMflow} evolves on the set of zero-diagonal Jacobi matrices with a fixed spectrum determined by its initial condition. Solutions exist for all $t\geq 0$, attain a limit as $t\to \infty$ and approach the set of equilibrium points (having cardinality greater than $1$ for $n>2$) asymptotically. For all initial conditions $H_0\in \jac_0(n)$ other than the equilibria, however, property \ref{cond:KvM:sorting} defines the limit matrix uniquely as follows: For $n$ even, a block-diagonal matrix containing $2\times 2$ blocks, such that the super-diagonal entries are sorted by strictly increasing absolute value. Furthermore, the off-diagonal entries in these $2\times 2$ blocks have the same sign as the respective entries in the matrix employed as initial condition. For $n$ odd, there is one additional $1\times 1$ block containing a zero that is the top left entry of the limit matrix.
				\item \citep{ref:Brockett-91} studied the o.d.e.\ \eqref{e:chTwo:KvMflow} in which the map $K$ on the right-hand side of \eqref{e:chTwo:KvMflow} was set to $K(H) = [H, N]$, where $N$ is a constant symmetric matrix. In our case, however, $K(H) = [H, N(H)]$, where \(N\) is a linear function of $H$, not a constant. As an important consequence of the definition of $K$ as in \eqref{Kac_K(H)}, all equilibrium points of \eqref{e:chTwo:KvMflow} are non-hyperbolic. In contrast, for $K(H) = [H, N]$, where $N$ is a constant symmetric matrix, all the equilibrium points are known to be hyperbolic \citep{ref:Brockett-91}. In particular, the proof techniques in \citep{ref:Brockett-91} do not carry over, and in order to prove the sorting property \eqref{cond:KvM:sorting} in Theorem \ref{t:mainTheoremKvM}---the stable manifold theorems \citep[p.\ 362ff]{helmke} cannot be employed. As such, the analysis of \eqref{e:chTwo:KvMflow} requires new tools.
				\item It is possible to treat $N$ as a parameter in \citep{ref:Brockett-91}, and can therefore be employed, e.g., to sort the eigenvalues of $\lim_{t\to\infty} H(t)$ according to a particular order by selecting an appropriate matrix $N$. Such variations in our case are not readily available since \(K\) is a fixed function defined by \eqref{Kac_K(H)}.
				\item The dynamical system $\frac{\drv}{\drv t}H(t)=[K(H(t)),H(t)]$ was studied first in \citep{kacVanMoerbeke}. This article studied several properties of \eqref{e:chTwo:KvMflow} by considering the dynamics of the individual components, and the techniques relied on properties of the orthogonal polynomials associated to Jacobi matrices. In contrast, our technical tools are system theoretic. The analysis of the properties of \eqref{e:chTwo:KvMflow} from a double bracket perspective, to our knowledge, has been carried out here for the first time. In addition, the sorting property \eqref{cond:KvM:sorting} in Theorem \ref{t:mainTheoremKvM} is an entirely new observation.
			\end{enumerate}
		\end{remarksnn}

		\textit{Outline of the article:} \secref{s:mainresult} presents the proof of the Theorem, which we illustrate with some numerical examples in \secref{s:examples}. We conclude in \secref{s:concl} with a summary of our work and comment on possible subjects of further research.
	
  \section{Proof of Theorem \ref{t:mainTheoremKvM}}
	\label{s:mainresult}
		Some preliminaries are needed in order to prove Theorem \ref{t:mainTheoremKvM}. We begin with the following classical result, which will play a key role behind proving that the solutions to \eqref{e:chTwo:KvMflow} are isospectral. 
		\begin{proposition}[\citep{ref:Lax-68}]
		\label{prop:ch1:isospectralflows}
		Let $\varphi:$ $\sym(n)\lra $ $\skw(n)$ be a smooth mapping, and suppose that 
		$\gamma: \R _{\geq 0}\lra \sym(n)$ is a curve satisfying
		\begin{equation} \label{eq: bracket}
		\frac{\drv\gamma}{\drv t}(t)=[\varphi \circ \gamma (t),\gamma (t)],\qquad t\ge 0.
		\end{equation}
		Then there exists a smooth family of unitary matrices $(U(t))_{t\geq 0}$ with $U(0)=I_{n}$ such that
		\[
			\gamma (t)=U(t)^{-1}\gamma (0) U(t),\qquad t \geq 0.
		\]
		The family $(\gamma (t))_{t\geq 0}$ is thus isospectral.
		\end{proposition}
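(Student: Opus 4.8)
The plan is to run the classical Lax-pair argument: I would manufacture the conjugating family $U(\cdot)$ as the solution of an auxiliary \emph{linear} matrix differential equation whose coefficient is read off from the right-hand side of \eqref{eq: bracket}, and then verify that it does the job by differentiating the candidate conjugation relation. Concretely, first I would set $A(t) \Let \varphi(\gamma(t))$, which is a smooth curve in $\skw(n)$ since $\varphi$ is smooth and $\gamma$ is smooth as a solution of \eqref{eq: bracket}. I would then define $U(t)$ to be the unique solution of the linear initial value problem $\frac{\drv}{\drv t}U(t) = -U(t)A(t)$, $U(0) = \idmat{n}$. Because this is a linear equation with coefficient $A(\cdot)$ continuous on all of $\R_{\ge 0}$, its solution exists and is smooth on the entire half-line---there is no finite escape time---so $U(\cdot)$ is defined for every $t \ge 0$ without a separate existence argument.

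Next I would check that each $U(t)$ is orthogonal. Differentiating $U U\transp$ and substituting the defining equation gives $\frac{\drv}{\drv t}(U U\transp) = \dot U U\transp + U \dot U\transp = -U A U\transp - U A\transp U\transp = -U(A + A\transp)U\transp = 0$, because $A \in \skw(n)$. Since $U(0)U(0)\transp = \idmat{n}$, this forces $U(t)U(t)\transp = \idmat{n}$ for all $t$, so $U(t)^{-1} = U(t)\transp$ and $U(t)$ is unitary (indeed real orthogonal, which is why one may write either $U^{-1}$ or $U\transp$ in the statement).

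The heart of the matter is to show that $W(t) \Let U(t)\gamma(t)U(t)^{-1}$ is constant. I would differentiate $W$ by the product rule, using $\frac{\drv}{\drv t}(U^{-1}) = -U^{-1}\dot U U^{-1} = A U^{-1}$, the equation $\dot\gamma = [A,\gamma]$, and $\dot U = -UA$. The three resulting terms are $-UA\gamma U^{-1}$, $\ U(A\gamma - \gamma A)U^{-1}$, and $\ U\gamma A U^{-1}$; these cancel in pairs, so $\dot W \equiv 0$. Hence $W(t) = W(0) = \gamma(0)$, that is, $\gamma(t) = U(t)^{-1}\gamma(0)U(t)$. Since orthogonal conjugation preserves the spectrum, the family $(\gamma(t))_{t\ge 0}$ is isospectral, as claimed.

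I do not expect a genuine obstacle here; the only point requiring care is the sign convention in the auxiliary equation. The choice $\dot U = -UA$ (rather than $+UA$ or $AU$) is precisely what makes both the orthogonality computation and the cancellation in $\dot W$ succeed, and it is not guessed but \emph{forced}: differentiating the target relation $\gamma = U^{-1}\gamma(0)U$ and writing $\Omega \Let U^{-1}\dot U$ yields $\dot\gamma = [\gamma,\Omega]$, which matches $\dot\gamma = [A,\gamma]$ exactly when $\Omega = -A$. Everything else is routine matrix calculus together with the global-existence theorem for linear ordinary differential equations.
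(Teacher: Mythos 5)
Your argument is correct. Note that the paper itself gives no proof of this proposition---it is quoted as a classical result with a citation to Lax (1968)---so there is nothing internal to compare against; your construction ($\dot U=-U\varphi(\gamma)$, orthogonality of $U$ from skew-symmetry of $\varphi(\gamma)$, and constancy of $U\gamma U^{-1}$ by direct differentiation) is precisely the standard Lax-pair argument that the cited source supplies, including the correct handling of the sign convention and of global existence for the linear time-varying equation defining $U$.
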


		Next we define the mapping
		\begin{equation}\label{e:define:N}
		\begin{aligned}
		  \sym(n) & \ni  A \Let
				\begin{pmatrix}
					a_{11}		& a_{12}	& \hdots 	& 		& \\
					a_{12}		& a_{22}	& 		& 		& \\
					\vdots		& 		& \ddots	& 		& \\
							&		&		& 		& \\
							&		&		& a_{n-1,n-1}	& a_{n-1,n}\\
							& 		&		& a_{n-1,n}	& a_{nn}
				\end{pmatrix}\mapsto\\
				& \qquad \qquad N(A)\Let\D_{1}(-a_{12},0,a_{34},2a_{45},\hdots,(n-3)a_{n-1,n})\in\sym(n).
		\end{aligned}
		\end{equation}
		The mapping $N(\cdot)$ is linear, and can be written as
		\begin{equation} \label{e:repN}
		N(A)=\sum_{i=1}^{n-1}(i-2)\bigl(E_iAE_{i+1}+E_{i+1}AE_i\bigr),
		\end{equation}
		where $A\in\sym(n)$ and $E_i$ is the matrix with $1$ at its $(i,i)$-th entry and zeros elsewhere.
		\begin{proposition} 
		\label{p:ch2:kvmDbf}
		Let 
		\begin{equation}
		H_n= 
		\begin{pmatrix}
			0 	& 	a_1 	& 	0	& 		&	 	& 		& 		&  	\\
			a_1 	& 	0 	& 	a_2 	& 		& 		& 		& 		& 	\\
			0	& 	a_2 	&  		& 		& 		& 		&		&	\\
				&     		&  		&	\ddots	& 		& 		& 		&	\\
				&  		&  		& 		& 		&	a_{n-2} & 	0	&	\\
				&  		&  		& 		&	a_{n-2} &	0 	&	a_{n-1} &	\\
				&  		&  		& 		&	0 	&	a_{n-1} &	0	&	\\
					\end{pmatrix}
		\in \jac_0(n).
		\end{equation}
		Then the commutator of $H_n$ and $N(H_n)$ is given by
		\begin{equation} \label{e:chTwo:K}
		\begin{aligned}
		K(H_n) & \Let [H_n,N(H_n)]\\
		&= \D_{u,2}(a_1a_2,\hdots,a_{n-2}a_{n-1})-\D_{u,2}(a_1a_2,\hdots,a_{n-2}a_{n-1})\transp,
		\end{aligned}
		\end{equation}
		where $N(\cdot)$ is the linear mapping defined in \eqref{e:define:N}.
		\end{proposition}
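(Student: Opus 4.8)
The statement is a direct verification, so the plan is to compute the commutator $[H_n,N(H_n)]$ entrywise. First I would record that, by the definition \eqref{e:define:N}, the matrix $N(H_n)$ is again symmetric and tridiagonal with vanishing diagonal: writing $(H_n)_{i,i+1}=a_i$, its $(i,i+1)$-entry is $c_i\Let(i-2)a_i$ for $i=1,\dots,n-1$. Thus both factors of the commutator are tridiagonal with zero diagonal, so their product is pentadiagonal; consequently $[H_n,N(H_n)]$ can have nonzero entries only on the diagonals indexed by $k-j\in\{-2,-1,0,1,2\}$, and it suffices to evaluate these five families. Using tridiagonality I would expand
\[
\bigl(H_nN(H_n)\bigr)_{jk}=a_{j-1}(N(H_n))_{j-1,k}+a_j(N(H_n))_{j+1,k},
\]
together with the analogous expression for $N(H_n)H_n$, and then read off which values of $k$ survive.

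The diagonal and first off-diagonal entries contribute nothing. On the diagonal the two products give $a_{j-1}c_{j-1}+a_jc_j$ and $c_{j-1}a_{j-1}+c_ja_j$, which cancel, so $[H_n,N(H_n)]_{jj}=0$. For $k=j\pm1$ the surviving matrix entries sit two steps away from the diagonal of the remaining tridiagonal factor and are therefore zero, so these entries vanish as well. The content of the proposition lives on the second off-diagonals. For the second super-diagonal I would compute
\[
[H_n,N(H_n)]_{j,j+2}=a_jc_{j+1}-c_ja_{j+1},
\]
and substituting $c_i=(i-2)a_i$ collapses this to $a_ja_{j+1}\bigl((j-1)-(j-2)\bigr)=a_ja_{j+1}$; symmetrically, $[H_n,N(H_n)]_{j,j-2}=a_{j-1}c_{j-2}-c_{j-1}a_{j-2}=a_{j-2}a_{j-1}\bigl((j-4)-(j-3)\bigr)=-a_{j-2}a_{j-1}$. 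These are exactly the entries of $\D_{u,2}(a_1a_2,\dots,a_{n-2}a_{n-1})$ and of $-\D_{u,2}(a_1a_2,\dots,a_{n-2}a_{n-1})\transp$, which establishes \eqref{e:chTwo:K}.

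There is no serious obstacle here; the only care required is bookkeeping at the two corners of the matrix, where indices such as $c_{j-1}$, $a_{j+1}$, or $a_{j-2}$ fall out of range and the corresponding terms simply drop out, consistently with the shorter off-diagonals of $\D_{u,2}$. The one genuinely structural point worth flagging is that the weights $i-2$ in the definition of $N$ are engineered precisely so that the consecutive differences appearing above, $(j-1)-(j-2)$ and $(j-4)-(j-3)$, equal $\pm1$; it is this telescoping that turns the a priori quadratic expressions into the clean products $\pm a_ia_{i+1}$, and it is the reason this particular linear map $N$ reproduces the matrix $K(H)$ of \eqref{Kac_K(H)}.
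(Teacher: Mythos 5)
Your computation is correct, but it takes a genuinely different route from the paper. The paper proves Proposition \ref{p:ch2:kvmDbf} by induction on $n$: after checking the bases $n=1,2$, it embeds $H_{n-1}$ and $N(H_{n-1})$ as leading blocks of $H_{n+1}$ and $N(H_{n+1})$, multiplies out the two bordered matrices, and observes that all the new corner entries cancel except the two products $a_{n-2}a_{n-1}$ and $a_{n-1}a_n$ on the second off-diagonals. You instead compute the commutator entrywise, using the tridiagonal-with-zero-diagonal structure of both factors to restrict attention to the diagonals $k-j\in\{-2,\dots,2\}$ and then exploiting the telescoping $(j-1)-(j-2)=1$ built into the weights of $N$. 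Your argument is shorter, handles all of $n$ at once, and makes explicit \emph{why} the linear map $N$ is the right one --- a point the induction obscures; the paper's block-bordering argument, on the other hand, matches the style of the induction used again in Lemma \ref{lemma:chTwo:stayJacobi} and displays how the new entries enter as the matrix grows. Two small remarks: since both factors have zero diagonal as well as being tridiagonal, the products are in fact supported only on $k-j\in\{-2,0,2\}$, so the first off-diagonals need no separate discussion; and your stated reason for their vanishing (``two steps away from the diagonal'') overlooks that one of the two surviving terms, e.g.\ $(N(H_n))_{j+1,j+1}$, lies \emph{on} the diagonal and vanishes only because $N(H_n)$ has zero diagonal --- the conclusion is unaffected.
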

		\begin{proof}
		The proof proceeds by induction. Observe that $\D_{u, 2} = 0$ if $n = 1$. For $n=1$, we have $H_n=0$, and therefore $K(H_n)=0$. For $n=2$, we have $H_n=\left(\begin{smallmatrix}
		                                                                                             0 & a_1 \\ a_1 & 0
		                                                                                            \end{smallmatrix} \right)$
		and $N(H_n)=\left(\begin{smallmatrix}
		                        0 & -a_1 \\ -a_1 & 0
		                  \end{smallmatrix} \right)$.
		Since $H_n$ and $N(H_n)$ commute, $K(H_n)=0$.
		
		We consider the induction step:
		\begin{equation}
		H_{n+1}=\begin{pmatrix}
			  & & & & 0     &        0\\
			  & & \boxed{H_{n-1}}& &\vdots &   \vdots\\
			  & & & & 0     &        0\\
			  & & & &a_{n-1}     &        0\\
			0&\hdots& 0&a_{n-1}&0 & a_n \\
			0&\hdots& 0& 0 &a_n & 0 
			\end{pmatrix}
		\end{equation}
		and
		\begin{equation}
		N(H_{n+1})=\begin{pmatrix}
			  & & & & 0     &        0\\
			  & & \boxed{N(H_{n-1})}& &\vdots &   \vdots\\
			  & & & & 0     &        0\\
			  & & & &(n-3)a_{n-1}     &        0\\
			0&\hdots& 0&(n-3)a_{n-1}&0 & (n-2)a_n \\
			0&\hdots& 0& 0 &(n-2)a_n & 0 
			\end{pmatrix}.
		\end{equation}
		Simple matrix multiplications lead to
		\begin{equation} \label{HN(H_n)+1}
		\begin{aligned}
		H_{n+1}N(H_{n+1}) &=
		\begin{pmatrix}
			  & & & & & 0     &        0\\
			  & & & \boxed{H_{n-1}N(H_{n-1})} & &\vdots &   \vdots\\
			  & &  & & & 0     &        0\\
			  & & &  & & (n-3)a_{n-1}a_{n-2}     &        0\\ 
			  & & & & & 0    &        \kreis{3}\\   
			0&\hdots& 0&(n-4)a_{n-1}a_{n-2}&0& \kreis{4} & 0 \\
			0&\hdots& 0& 0 & \kreis{1} &0 & \kreis{2} 
			\end{pmatrix},\\
		\end{aligned}
		\end{equation}
		where
		\begin{align*}
		\kreis{1} &= (n-3)a_na_{n-1}, \\
		\kreis{2} &= (n-2)a^2_n, \\
		\kreis{3} &= (n-2)a_na_{n-1}, \\
		\kreis{4} &= (n-3)a^2_{n-1}+(n-2)a^2_n,
		\end{align*}
		and
		\begin{equation} \label{NH_n+1}
		\begin{aligned}
		N(H_{n+1})H_{n+1}&=
		\begin{pmatrix}
			  & & & & & 0     &        0\\
			  & & & \boxed{N(H_{n-1})H_{n-1}} & &\vdots &   \vdots\\
			  & &  & & & 0     &        0\\
			  & & &  & & (n-4)a_{n-1}a_{n-2}     &        0\\ 
			  & & & & & 0    &        \kreis{6}\\   
			0&\hdots& 0&(n-3)a_{n-1}a_{n-2}&0& \kreis{4} & 0 \\
			0&\hdots& 0& 0 & \kreis{5} &0 & \kreis{2} 
			\end{pmatrix}, \\
		\end{aligned}
		\end{equation}
		where
		\begin{align*}
		\kreis{5} = (n-2)a_na_{n-1}, \\
		\kreis{6} = (n-3)a_na_{n-1}.
		\end{align*}
		By the induction hypothesis,
		\[
			[H_{n-1},N(H_{n-1})]=H_{n-1}N(H_{n-1})-N(H_{n-1})H_{n-1}=K(H_{n-1}).
		\]
		Therefore, by \eqref{HN(H_n)+1} and \eqref{NH_n+1} we get
		\begin{align*}
		[H_{n+1},N(H_{n+1})]&=
		\begin{pmatrix}
			  & & & & & 0     &        0\\
			  & & & \boxed{K(H_{n-1})} & &\vdots &   \vdots\\
			  & &  & & & 0     &        0\\
			  & & &  & & a_{n-1}a_{n-2}     &        0\\ 
			  & & & & & 0    &        a_na_{n-1}\\   
			0&\hdots& 0&-a_{n-1}a_{n-2}&0& 0 & 0 \\
			0&\hdots& 0& 0 & -a_na_{n-1} &0 & 0 
			\end{pmatrix}\\
		&=K(H_{n+1}). \qedhere
		\end{align*}
		\end{proof}
		Note, that in view of Proposition \ref{p:ch2:kvmDbf}, the modified Kac-van Moerbeke equation \eqref{e:chTwo:KvMflow} can be represented as the double bracket o.d.e.
		\begin{equation}
		\label{e:KvM_DBF}
				\frac{\drv}{\drv t}H(t)=\left[H(t),[H(t),N(H(t))]\right],
			\qquad H_0\Let H(0)\in \jac_0(n).
			\end{equation}
		We shall employ the following auxiliary lemma in the proof of Theorem \ref{t:mainTheoremKvM}.
		\begin{lemma}
		\label{lemma:chTwo:stayJacobi}
		If $A=\D_1(a_1,a_2,\hdots,a_{n-1})$ with $a_i\in \R$ for all $i=1,\hdots,n-1$, then 
		\begin{equation}
		 \label{e:lemma:RHS}
		\begin{aligned}
			& [A,[A,N(A)]] =\\
			& \qquad \D_{1}(-a_1a_2^2,-a_2a_3^2+a_1^2a_2,\hdots ,-a_{n-2}a_{n-1}^2+a_{n-3}^2a_{n-2}, a_{n-2}^2a_{n-1}),
		\end{aligned}
		\end{equation}
		where the mapping $N(\cdot)$ is defined in \eqref{e:define:N}.
		\end{lemma}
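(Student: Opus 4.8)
The plan is to collapse the nested commutator into a single one by invoking the result already proved, and then to evaluate that commutator band by band. Writing $A=\D_1(a_1,\dots,a_{n-1})$, Proposition~\ref{p:ch2:kvmDbf} gives $[A,N(A)]=K(A)$, where $K(A)\in\skw(n)$ is supported on the second off-diagonals, with nonzero entries $K(A)_{i,i+2}=-K(A)_{i+2,i}=a_ia_{i+1}$. Hence $[A,[A,N(A)]]=[A,K(A)]=AK(A)-K(A)A$, and it suffices to compute this single commutator entrywise. I would record at the outset the banded structure of the two factors: $A$ has nonzero entries $A_{i,i+1}=A_{i+1,i}=a_i$ on the first off-diagonals, while $K(A)$ lives on the second off-diagonals. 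A product of a matrix banded on $\pm1$ with one banded on $\pm2$ can be nonzero only on the diagonals $j-i\in\{-3,-1,1,3\}$, so $[A,K(A)]$ is confined to these four bands.

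Next I would dispose of the bands that do not appear in the claimed formula. For the diagonal, both $(AK(A))_{ii}$ and $(K(A)A)_{ii}$ would require an intermediate index $k$ that is simultaneously adjacent to $i$ (as demanded by $A$) and two steps from $i$ (as demanded by $K(A)$), which is impossible; hence the diagonal vanishes. For the outermost band, the $(i,i+3)$ entry of $AK(A)$ has a single surviving term $A_{i,i+1}K(A)_{i+1,i+3}=a_ia_{i+1}a_{i+2}$, and the $(i,i+3)$ entry of $K(A)A$ has the single term $K(A)_{i,i+2}A_{i+2,i+3}=a_ia_{i+1}a_{i+2}$, so their difference is zero; the $(i,i-3)$ band is identical. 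This simultaneously shows that the result stays zero-diagonal and tridiagonal, which is the content signalled by the name of the lemma.

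It then remains to compute the first off-diagonal. For the $(i,i+1)$ entry, the only surviving term of $(AK(A))_{i,i+1}$ comes from the intermediate index $k=i-1$, namely $A_{i,i-1}K(A)_{i-1,i+1}=a_{i-1}\cdot a_{i-1}a_i=a_{i-1}^2a_i$, while the only surviving term of $(K(A)A)_{i,i+1}$ comes from $k=i+2$, namely $K(A)_{i,i+2}A_{i+2,i+1}=a_ia_{i+1}\cdot a_{i+1}=a_ia_{i+1}^2$. Thus $[A,K(A)]_{i,i+1}=a_{i-1}^2a_i-a_ia_{i+1}^2$ under the boundary convention $a_0=a_n=0$. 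Since $A^\top=A$ and $K(A)^\top=-K(A)$, one has $[A,K(A)]^\top=[A,K(A)]$, so the subdiagonal entries coincide with these and require no separate computation. Letting $i$ range over $1,\dots,n-1$ and inserting the convention $a_0=a_n=0$ then reproduces exactly $\D_1(-a_1a_2^2,\,a_1^2a_2-a_2a_3^2,\dots,a_{n-3}^2a_{n-2}-a_{n-2}a_{n-1}^2,\,a_{n-2}^2a_{n-1})$, which is the asserted identity \eqref{e:lemma:RHS}.

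The only point demanding care—rather than any genuine difficulty—is the bookkeeping at the two endpoints $i=1$ and $i=n-1$: one must verify that the convention $a_0=a_n=0$ correctly collapses the generic expression $a_{i-1}^2a_i-a_ia_{i+1}^2$ into the asymmetric first and last entries $-a_1a_2^2$ and $a_{n-2}^2a_{n-1}$ displayed in the statement. An alternative would be an induction on $n$ mirroring the block-partition argument of Proposition~\ref{p:ch2:kvmDbf}, but the direct entrywise evaluation above is shorter and makes the cancellation of the diagonal and $\pm3$ bands fully transparent.
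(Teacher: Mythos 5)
Your argument is correct, and it reaches the identity by a genuinely different route from the paper. Both proofs begin identically, invoking Proposition~\ref{p:ch2:kvmDbf} to replace the inner bracket by $K(A)$ and thereby reduce the problem to the single commutator $[A,K(A)]$. From there the paper splits $A=A_1+A_2$ and $K(A)=K_1+K_2$ into upper- and lower-triangular parts, proves $[A_1,K_1]=[A_2,K_2]=0$ and computes $[A_2,K_1]$ by induction on $n$ via block partitions, then recovers $[A_1,K_2]$ by transposition. You instead exploit the band structure directly: since $A$ is supported on the diagonals $j-i=\pm1$ and $K(A)$ on $j-i=\pm2$, the commutator can only live on the bands $j-i\in\{-3,-1,1,3\}$; you verify the cancellation on the $\pm3$ bands, observe the diagonal is vacuously zero, compute the single surviving term on each side of the $(i,i+1)$ entry to get $a_{i-1}^2a_i-a_ia_{i+1}^2$ with the convention $a_0=a_n=0$, and obtain the subdiagonal for free from $[A,K(A)]^\top=[A,K(A)]$. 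Your index bookkeeping is right throughout, including the endpoint cases that produce the asymmetric entries $-a_1a_2^2$ and $a_{n-2}^2a_{n-1}$. What the direct entrywise evaluation buys is brevity and transparency --- in particular it makes manifest, with no induction, why the result remains tridiagonal with zero diagonal; what the paper's decomposition buys is a uniform inductive template matching the style of Proposition~\ref{p:ch2:kvmDbf}, at the cost of tracking four sub-commutators. Either proof is acceptable.
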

		\begin{proof}
		We have already shown in Proposition \ref{p:ch2:kvmDbf} that $K(A) = [A, N(A)]$. Let $A_1$ denote the upper triangular part and $A_2$ the lower triangular part of $A$, such that $A=A_1+A_2$. Using an analogous decomposition for $K(A)$ we 
		get $K_1(A) + K_2(A)=K(A)$. We abbreviate and simply write $K_i$ for $K_i(A)$, $i\in\{1,2\}$. We decompose
		\begin{equation} \label{iso_rhs}
		\begin{aligned}
		[A,K(A)]&=[A_1+A_2,K_1+K_2]\\
		&=[A_1,K_1]+[A_1,K_2]+[A_2,K_1]+[A_2,K_2].
		\end{aligned}
		\end{equation}
		We first show by induction that $[A_1,K_1]=0$. Indeed, if we denote with a superscript the size of a matrix and consider $A^n=\D_1(a_1,a_2,\hdots,a_{n-1})$, i.e., $A_1^n=\D_{u,1}(a_1,a_2,\hdots,a_{n-1})$ and $A_2^n=\D_{u,1}(a_1,a_2,\hdots,a_{n-1})\transp$, we observe that for $n=1$ we have $[A_1^n,K_1^n]=0$. The induction step can be done as follows:
		\begin{align*}
		  [A_1^{n+1}, &\  K_1^{n+1}]\\
		  &=A_1^{n+1}K_1^{n+1}-K_1^{n+1}A_1^{n+1}\\
		&= 
		\begin{pmatrix}
		    &&&0\\
		    &\boxed{A_1^{n}}&&\vdots\\
		    &&&0\\
		    &&&a_n \\ 
		  0&\hdots&0&0
		  \end{pmatrix}
		\begin{pmatrix}
		    &&&0\\
		    &\boxed{K_1^{n}}&&\vdots\\
		    &&&a_{n-1}a_n\\
		    &&&0 \\ 
		  0&\hdots&0&0
		  \end{pmatrix}\\
		&\quad -
		\begin{pmatrix}
		    &&&0\\
		    &\boxed{K_1^{n}}&&\vdots\\
		    &&&a_{n-1}a_n\\
		    &&&0 \\ 
		  0&\hdots&0&0
		  \end{pmatrix}
		\begin{pmatrix}
		    &&&0\\
		    &\boxed{A_1^{n}}&&\vdots\\
		    &&&0\\
		    &&&a_n \\ 
		  0&\hdots&0&0
		  \end{pmatrix}\\
		&=
		\begin{pmatrix}
		    &&&0\\
		    &\boxed{A_1^nK_1^{n}}&&\vdots\\
		    &&&a_{n-2}a_{n-1}a_n\\
		    &&&0 \\
		    &&&0 \\ 
		  0&\hdots&0&0
		  \end{pmatrix}-
		\begin{pmatrix}
		    &&&0\\
		    &\boxed{K_1^nA_1^{n}}&&\vdots\\
		    &&&a_{n-2}a_{n-1}a_n\\
		    &&&0 \\
		    &&&0 \\ 
		  0&\hdots&0&0
		  \end{pmatrix}\\
		&=0.
		\end{align*}
		Moreover, we have $A\transp_1=A_2$ and $K\transp_1=-K_2$. Therefore, 
		\[
		[A_2,K_2]=A_2K_2-K_2A_2=-A\transp_1K\transp_1+K\transp_1A\transp_1=(A_1K_1-K_1A_1)\transp=[A_1,K_1]\transp.
		\]
		Thus, $[A_2,K_2]=0$ as well, and it remains to show (again by induction) that 
		\begin{align*}
			& [A_2,K_1]+[A_1,K_2]\\
			& \quad = \D_{1}(-a_1a_2^2,-a_2a_3^2+a_1^2a_2,\hdots ,-a_{n-2}a_{n-1}^2+a_{n-3}^2a_{n-2}, a_{n-2}^2a_{n-1}).
		\end{align*}
		As the next step, we claim that
		\[
		[A_2^n,K_1^n]= \D_{u,1}(-a_1a_2^2,-a_2a_3^2+a_1^2a_2,\hdots ,-a_{n-2}a_{n-1}^2+a_{n-3}^2a_{n-2}, a_{n-2}^2a_{n-1}).
		\]
		The induction base is trivial. Then we have:
		\begin{align*}
		[A_2^{n+1}, &\ K_1^{n+1}]\\
		& =A_2^{n+1}K_1^{n+1}-K_1^{n+1}A_2^{n+1}\\
		&=   
		\begin{pmatrix}
		    &&&0\\
		    &\boxed{A_2^{n}}&&\vdots\\
		    &&&0\\
		    &&&0 \\ 
		  0&\hdots&a_n&0
		  \end{pmatrix}
		  \begin{pmatrix}
		    &&&0\\
		    &\boxed{K_1^{n}}&&\vdots\\
		    &&&a_{n-1}a_n\\
		    &&&0 \\ 
		  0&\hdots&0&0
		  \end{pmatrix}\\
		&\quad -
		\begin{pmatrix}
		    &&&0\\
		    &\boxed{K_1^{n}}&&\vdots\\
		    &&&a_{n-1}a_n\\
		    &&&0 \\ 
		  0&\hdots&0&0
		  \end{pmatrix}
		\begin{pmatrix}
		    &&&0\\
		    &\boxed{A_2^{n}}&&\vdots\\
		    &&&0\\
		    &&&0 \\ 
		  0&\hdots&a_n&0
		  \end{pmatrix}\\
		&= 
		  \begin{pmatrix}
		    &&&0\\
		    &\boxed{A_2^{n}K_1^n}&&\vdots\\
		    &&&0\\
		    &&0&a_{n-1}^2a_n \\ 
		  0&\hdots&0&0
		  \end{pmatrix}\\
		&\quad -
		\begin{pmatrix}
		    &&&0\\
		    &\boxed{K_1^{n}A_2^n}&&\vdots\\
		    &&&0\\
		    &&&0 \\ 
		  0&\hdots&0&0
		  \end{pmatrix}-
		  \begin{pmatrix}
		    &&&0\\
		    &0&&\vdots\\
		    &&a_{n-1}a_n^2&0\\
		    &&0&0 \\ 
		  0&\hdots&0&0
		  \end{pmatrix} \\
		&=\D_{u,1}(-a_1a_2^2,-a_2a_3^2+a_1^2a_2,\hdots ,-a_{n-1}a_{n}^2+a_{n-2}^2a_{n-1}, a_{n-1}^2a_{n}).
		\end{align*}
		Again, in view of $A\transp_1=A_2$ and $K\transp_1=-K_2$, we obtain
		\[
		[A_1,K_2]=A_1K_2-K_2A_1=-A\transp_2K\transp_1+K\transp_1A\transp_2=(A_2K_1-K_1A_2)\transp=[A_2,K_1]\transp.
		\]
		Thus, 
		\[
		[A_1^n,K_2^n]=\D_{u,1}(a_1a_2^2,a_2a_3^2+a_1^2a_2,\hdots ,-a_{n-2}a_{n-1}^2+a_{n-3}^2a_{n-2}, a_{n-2}^2a_{n-1})\transp. 
		\]
		According to \eqref{iso_rhs}, for $A=\D_1(a_1,a_2,\hdots,a_{n-1})$ we have
		\[
		[A,K(A)]=\D_{1}(-a_1a_2^2,-a_2a_3^2+a_1^2a_2,\hdots ,-a_{n-2}a_{n-1}^2+a_{n-3}^2a_{n-2}, a_{n-2}^2a_{n-1}),
		\]
		which completes the proof.
		\end{proof}
	      \begin{lemma}\label{lemma:chTwo:charEquilibira}
	      For $A, B\in \sym(n)$,
	      \[
	      [A,[A,B]]=0 \text{ if and only if } [A,B]=0.
	      \]
	      \end{lemma}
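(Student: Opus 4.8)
The plan is to dispose of the trivial implication first and then reduce the nontrivial one to a positivity argument via the Frobenius trace form. The forward direction is immediate: if $[A,B]=0$, then $[A,[A,B]]=[A,0]=0$. So the content lies entirely in the converse, namely that $[A,[A,B]]=0$ forces $[A,B]=0$.

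For the converse I would set $C\Let[A,B]$ and first record that $C$ is skew-symmetric: using $A\transp=A$ and $B\transp=B$ one has $C\transp=(AB-BA)\transp=B\transp A\transp-A\transp B\transp=BA-AB=-C$. The decisive step is to pair $[A,[A,B]]$ against $B$ in the Frobenius inner product $\inprod{X}{Y}=\tr(X\transp Y)$ and to exploit the cyclic invariance of the trace. Concretely, since $B$ is symmetric,
\[
\tr\bigl(B[A,[A,B]]\bigr)=\tr\bigl(B(AC-CA)\bigr)=\tr(BAC)-\tr(ABC)=\tr\bigl((BA-AB)C\bigr)=-\tr(C^2),
\]
where the middle equality uses the cyclic identity $\tr(BCA)=\tr(ABC)$ and the last uses $BA-AB=-C$. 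Because $C$ is skew-symmetric, $-\tr(C^2)=\tr(C\transp C)=\norm{C}^2$, so this pairing evaluates to $\norm{[A,B]}^2$.

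With that identity established, the conclusion drops out: if $[A,[A,B]]=0$, then the left-hand side vanishes, forcing $\norm{[A,B]}^2=0$ and hence $[A,B]=0$. Conceptually the whole argument is nothing but the self-adjointness of the operator $X\mapsto[A,X]$ for symmetric $A$ with respect to the Frobenius inner product, which lets one rewrite $\inprod{B}{[A,[A,B]]}$ as $\norm{[A,B]}^2$ and thereby turn the vanishing of the nested bracket into the vanishing of a squared norm. I do not anticipate a genuine obstacle here; the only things to watch are the sign bookkeeping in the trace manipulation and the systematic use of the symmetry of $A,B$ and the induced skew-symmetry of $C$.
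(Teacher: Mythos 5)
Your proposal is correct and follows essentially the same route as the paper: both prove the nontrivial direction by showing $\tr\bigl(B[A,[A,B]]\bigr)=\norm{[A,B]}^2$ via cyclicity of the trace and the skew-symmetry of $[A,B]$ (the paper just expands the nested commutator termwise instead of abbreviating it as $C$). No gaps.
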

	      \begin{proof}
	      The ``if'' part is trivial. To prove the ``only if'' part, suppose that $[A,[A,B]]=0$. This implies $B[A,[A,B]]=0$. Using the techniques in 
	      \citep[p.\ 49]{helmke}, we compute
	      \begin{align*}
	      0&=\tr\left(B[A,[A,B]] \right) \\
		&=\tr\left(B(A^2B-2ABA+BA^2)\right) \\
		&=\tr\left(BA^2B-2BABA+B^2A^2\right) \\
		&=\tr\left(BA^2B-BABA-ABAB+AB^2A  \right) \\
		&=\tr\left((BA-AB)(AB-BA)\right) \\
		&=\tr\left([B,A][A,B]\right) \\
		&=\tr\bigl([A,B]\transp [A,B]\bigr) \\
		&=\norm{[A,B]}^2,
	      \end{align*}
	      which immediately gives $[A,B]=0$.\qedhere
	      \end{proof}	
	      \begin{lemma} \label{lemma:chTwo:symFunction}
	    Consider the continuous function
	    \begin{equation}\label{e:ch:sym:functionfromlemma}
	    M(H_0)\ni H \mapsto f(H)\Let -\frac{1}{4} \|H-N(H)\|^2+\frac{1}{4} \|N(H)\|^2\in\R.
	    \end{equation}
	    With respect to the o.d.e.\ 
	    \begin{equation}
		\label{e:dblb}
	    \frac{\drv}{\drv t}H(t)=\left[H(t),\left[H(t),N(H(t))\right]\right], \quad H(0)=H_0\in \sym(n),
	    \end{equation}
	    the time derivative of $f(H(\cdot))$ is given by
	    \begin{equation} \label{e:ch:Two:timederivativefunction}
	    \frac{\drv}{\drv t}f(H(t))=\|[H(t),N(H(t))]\|^2.
	    \end{equation}
	    \end{lemma}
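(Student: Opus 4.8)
The plan is to first reduce $f$ to a manageable form and then differentiate term by term, the decisive algebraic input being that the linear map $N$ is self-adjoint for the Frobenius inner product. Since $N$ takes values in $\sym(n)$ and $H\in\sym(n)$, both $H$ and $N(H)$ are symmetric, so expanding the Frobenius norms via $\norm{X}^2=\tr(X^2)$ gives $\norm{H-N(H)}^2=\tr(H^2)-2\tr(HN(H))+\tr(N(H)^2)$. Substituting this into \eqref{e:ch:sym:functionfromlemma}, the two $\tfrac14\norm{N(H)}^2$ contributions cancel, leaving the much simpler expression $f(H)=-\tfrac14\tr(H^2)+\tfrac12\tr(HN(H))$.

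Next I would establish two facts. First, $\tr(H(t)^2)$ is constant along \eqref{e:dblb}: writing $K(H)\Let[H,N(H)]$ and $\dot H$ for $\tfrac{\drv}{\drv t}H(t)$, one has $\tfrac{\drv}{\drv t}\tr(H^2)=2\tr\!\bigl(H[H,K(H)]\bigr)=0$ by cyclicity of the trace (equivalently, $\tr(H^2)=\sum_i\lambda_i^2$ is a spectral invariant by Proposition \ref{prop:ch1:isospectralflows}). Second, and crucially, $N$ is self-adjoint, i.e. $\tr(AN(B))=\tr(N(A)B)$ for all $A,B\in\sym(n)$. This I would read off directly from the representation \eqref{e:repN}: writing $E_i=e_ie_i\transp$ for the standard basis vectors $e_i$, a short computation yields $\tr(AN(B))=2\sum_{i=1}^{n-1}(i-2)\,A_{i,i+1}B_{i,i+1}$, which is manifestly symmetric in $A$ and $B$.

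With these in hand I would differentiate. Using linearity of $N$ together with its self-adjointness, $\tfrac{\drv}{\drv t}\tr(HN(H))=\tr(\dot HN(H))+\tr(HN(\dot H))=2\tr(\dot HN(H))$, so that $\tfrac{\drv}{\drv t}f(H)=\tr(\dot HN(H))$. Substituting $\dot H=[H,[H,N(H)]]$ and abbreviating $K=[H,N(H)]$, which is skew-symmetric because $H$ and $N(H)$ are symmetric, it remains to show $\tr([H,K]N(H))=\norm{K}^2$. This follows from the cyclic trace identity $\tr([X,Y]Z)=\tr(X[Y,Z])$: indeed $\tr([H,K]N(H))=-\tr([K,H]N(H))=-\tr(K[H,N(H)])=-\tr(K^2)=\norm{K}^2$, the final equality because $K\transp=-K$. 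Combining this with $\tfrac{\drv}{\drv t}\tr(H^2)=0$ gives $\tfrac{\drv}{\drv t}f(H(t))=\norm{[H(t),N(H(t))]}^2$, as claimed.

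The only genuinely delicate point is the self-adjointness of $N$; without it the cross term $\tr(HN(\dot H))$ would not collapse, and the derivative would fail to be the sign-definite quantity $\norm{K}^2$ that makes $f$ a Lyapunov function for \eqref{e:dblb}. Everything else is routine trace bookkeeping, so I expect the entrywise identity $\tr(AN(B))=2\sum_i(i-2)A_{i,i+1}B_{i,i+1}$ to be the step requiring the most care.
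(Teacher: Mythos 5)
Your proof is correct and follows essentially the same route as the paper's: both simplify $f$ to $-\tfrac14\|H\|^2+\tfrac12\tr(HN(H))$, discard the first term by isospectrality, establish $\tr(N(\dot H)H)=\tr(N(H)\dot H)$ (your self-adjointness of $N$) from the expansion \eqref{e:repN}, and finish with the cyclic identity $\tr(A[B,C])=\tr([A,B]C)$. The only cosmetic difference is that you collapse the two equal contributions into $2\tr(\dot HN(H))$ up front via the explicit entrywise formula, whereas the paper evaluates the two halves separately and adds $\tfrac12\|K\|^2+\tfrac12\|K\|^2$.
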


	    \begin{proof}
	    We start by simplifying the function
	    \begin{align*}
	    f(H(t)) &=-\frac{1}{4} \|H(t)-N(H(t))\|^2+\frac{1}{4} \|N(H(t))\|^2 \\
		    &=-\frac{1}{4} \tr \left(H(t)H(t)-H(t)N(H(t))-N(H(t))H(t)+N(H(t))N(H(t)) \right)\\
			    & \qquad +\frac{1}{4} \|N(H(t))\|^2 \\
		    &=-\frac{1}{4} \|H(t)\|^2+\frac{1}{2}\tr \left(N(H(t))H(t) \right)-\frac{1}{4} \|N(H(t))\|^2+\frac{1}{4} \|N(H(t))\|^2 \\
		    &=-\frac{1}{4} \|H(t)\|^2+\frac{1}{2}\tr \left(N(H(t))H(t) \right).
	    \end{align*}
	    Since $H(t)\in \Ms(H_0)$ by Proposition \ref{prop:ch1:isospectralflows}, $\|H(t)\|$ is constant for all $t\geq 0 $. We calculate the derivative of $f$ along the trajectories of \eqref{e:dblb} as follows:
	    \begin{align}
	    \frac{\drv}{\drv t}f(H(t))
	    &=\frac{1}{2}\tr \left( \left(\frac{\drv}{\drv t}N(H(t)) \right)H(t) +N(H(t))\dot{H}(t)\right)\nn \\
	    &=\frac{1}{2}\tr \left( \left(\frac{\drv}{\drv t}N(H(t)) \right)H(t) \right)\nn\\
	    & \qquad +\frac{1}{2}\tr \left(N(H(t)) \left[H(t),[H(t),N(H(t))]\right] \right)\nn \\
	    &=\frac{1}{2}\tr \left( \left(\frac{\drv}{\drv t}N(H(t)) \right)H(t) \right)\nn\\
	    & \qquad + \frac{1}{2}\tr \left(\underbrace{[N(H(t)),H(t)]}_{[H(t),N(H(t))]\transp}[H(t),N(H(t))] \right)\nn \\
	    &=\frac{1}{2}\tr \left( \left(\frac{\drv}{\drv t}N(H(t)) \right)H(t) \right) + \frac{1}{2}\|[H(t),N(H(t))]\|^2,\label{e:ch:sym:derivfsecond}
	    \end{align}
	    where, at the third equality, we employed the fact \citep[p.\ 162]{ref:Ber-09} that for $A,B,C\in \R^{n\times n}$,  $\tr (A[B,C])=\tr([A,B]C)$. Therefore, it remains to show that 
	    \[
		    \tr \left( \left(\frac{\drv}{\drv t}N(H(t)) \right)H(t) \right)=\|[H(t),N(H(t))]\|^2.
	    \]
	    Note that
	    \[
	    \frac{\drv}{\drv t}N(H(t))=N\left(\dot{H}(t)\right)
	    \]
	    since $N$ is linear, and since from \eqref{e:ch:sym:derivfsecond} it follows that $\tr(N(H(t))\dot{H}(t))=\|[H(t),N(H(t))]\|^2$, our proof will be complete if we show that 
		\begin{equation}
		\label{e:equality of traces}
			\tr\bigl(N(\dot{H}(t))H(t)\bigr)=\tr\bigl(N(H(t))\dot{H}(t)\bigr).
		\end{equation}
	    To this end, employing the expansion of $N$ in \eqref{e:repN}, we see that
	    \begin{align*}
	    \tr\left( N(  \dot{H}(t)  )H(t)\right) &=\tr \left( \sum_{i=1}^{n-1}(i-2)\left(E_i\dot{H}(t)E_{i+1}+E_{i+1}\dot{H}(t)E_i\right)H(t) \right) \\
	    &=\sum_{i=1}^{n-1}(i-2) \tr\left( \left( E_i\dot{H}(t)E_{i+1}+E_{i+1}\dot{H}(t)E_i\right)H(t) \right) \\
	    &= \sum_{i=1}^{n-1}(i-2) \left( \tr\left( E_i\dot{H}(t)E_{i+1}H(t) \right)+\tr \left(E_{i+1}\dot{H}(t)E_iH(t)\right) \right) \\
	    &=\sum_{i=1}^{n-1}(i-2) \left( \tr\left( E_{i+1}H(t)E_i\dot{H}(t) \right)+\tr \left(E_iH(t)E_{i+1}\dot{H}(t)\right) \right) \\
	    &=\sum_{i=1}^{n-1}(i-2) \tr\left( \left( E_iH(t)E_{i+1}+E_{i+1}H(t)E_i\right)\dot{H}(t) \right) \\
	    &=\tr\left( N(H(t))\dot{H}(t)\right),
	    \end{align*}
	    which establishes \eqref{e:equality of traces}, and completes the proof.
	    \end{proof}
	    \begin{lemma}\label{lemma:chTwo:numberEquilibriaPoints}
	    The o.d.e.\ 
	    \begin{equation} \label{e:chTwo:lemmaEquilibriapoints}
	    \frac{\drv}{\drv t}H(t)=\left[H(t),\left[H(t),N(H(t))\right]\right], \quad H(0)=H_0\in \jac_0(n),
	    \end{equation}
	    has a finite number of equilibrium points on $\jac_0(n)$ that are isospectral to $H_0$.
	    \end{lemma}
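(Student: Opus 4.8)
The plan is to reduce the equilibrium condition to a purely combinatorial statement and then count the solutions compatible with a fixed spectrum. First I would observe that $\bar H\in\jac_0(n)$ is an equilibrium of \eqref{e:chTwo:lemmaEquilibriapoints} precisely when $[\bar H,[\bar H,N(\bar H)]]=0$. Applying Lemma~\ref{lemma:chTwo:charEquilibira} with $A=\bar H$ and $B=N(\bar H)$, this is equivalent to $K(\bar H)=[\bar H,N(\bar H)]=0$. Writing $\bar H=\D_1(a_1,\dots,a_{n-1})$ and invoking Proposition~\ref{p:ch2:kvmDbf}, we have $K(\bar H)=\D_{u,2}(a_1a_2,\dots,a_{n-2}a_{n-1})-\D_{u,2}(a_1a_2,\dots,a_{n-2}a_{n-1})\transp$. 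Hence the equilibrium condition on $\jac_0(n)$ is equivalent to the scalar system $a_ia_{i+1}=0$ for every $i=1,\dots,n-2$.

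Next I would translate these relations into a structural statement about $\bar H$. The equations $a_ia_{i+1}=0$ assert that no two consecutive off-diagonal entries are simultaneously nonzero; equivalently, whenever $a_i\neq 0$ one has $a_{i-1}=a_{i+1}=0$. Thus every nonzero off-diagonal entry is isolated, and the vanishing entries decouple $\bar H$ into a block-diagonal matrix whose blocks are either $1\times1$ zero blocks $(0)$ or $2\times2$ blocks $\left(\begin{smallmatrix}0 & a_i\\ a_i & 0\end{smallmatrix}\right)$ with $a_i\neq0$. Since the characteristic polynomial factors over the blocks, the spectrum of $\bar H$ is the union of the block spectra: each $2\times2$ block contributes the pair $\pm a_i$ and each $1\times1$ block contributes $0$.

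Finally I would impose isospectrality. As $\bar H$ is required to be isospectral to $H_0$, we have $\spectrum(\bar H)=\spectrum(H_0)$, a fixed finite set. Consequently each nonzero $a_i$ satisfies $a_i\in\spectrum(H_0)$, so $|a_i|$ lies in the finite set of absolute values of the eigenvalues of $H_0$. There are only finitely many admissible patterns for which off-diagonal positions are nonzero (these are in bijection with the tilings of a length-$n$ strip by $1\times1$ and $2\times2$ blocks), and for each such pattern every nonzero entry may assume only finitely many values, namely one of $\pm|\lambda_j|$. Combining finitely many patterns with finitely many value-and-sign assignments shows that the set of equilibria of \eqref{e:chTwo:lemmaEquilibriapoints} on $\jac_0(n)$ isospectral to $H_0$ is finite.

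The step carrying the real content is the last one: one must verify that fixing the spectrum genuinely \emph{discretizes} the otherwise free off-diagonal parameters $a_i$, ruling out a continuum of equilibria hiding inside a single block pattern. This is exactly where the block-diagonal decomposition is essential, since it reduces the eigenvalue constraint to the requirement that each $2\times2$ block have eigenvalues $\pm a_i$, pinning $|a_i|$ to one of the finitely many admissible values $|\lambda_j|$. I do not anticipate any obstacle beyond this reduction, the remaining count being routine bookkeeping.
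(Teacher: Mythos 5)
Your proposal is correct and follows essentially the same route as the paper: reduce the equilibrium condition to $K(\bar H)=0$ via Lemma~\ref{lemma:chTwo:charEquilibira} and Proposition~\ref{p:ch2:kvmDbf}, read off the scalar relations $a_ia_{i+1}=0$, and use the resulting block-diagonal structure together with the fixed spectrum to pin each nonzero $a_i$ to one of finitely many values $\pm|\lambda_j|$. The only difference is that the paper goes further and, splitting by the parity of $n$ and using distinctness of the eigenvalues, computes the exact counts $\bigl(\tfrac{n}{2}\bigr)!$ and $\bigl(\tfrac{n+1}{2}\bigr)\bigl(\tfrac{n-1}{2}\bigr)!$, whereas you settle for a finite upper bound over block patterns, which suffices for the statement.
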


	    \begin{proof}
	    In view of Proposition \ref{prop:ch1:isospectralflows} and Lemma \ref{lemma:chTwo:stayJacobi}, $H(t)\in\Mj(H_0)$ for all solutions of \eqref{e:chTwo:lemmaEquilibriapoints} and for all $t\geq 0$. Moreover, for $H_0\in\jac_0(n)$ we have $\Mj(H_0)\subseteq \jac_0(n).$
	    By Lemma \ref{lemma:chTwo:charEquilibira}, 
	    \begin{equation} \label{e:chTwo:def:E}
	    \begin{aligned}                                              
	    E&\Let\bigl\{ H(t)\in \Mj(H_0)\,\big|\,\norm{[H(t),N(H(t))]}^2=0 \bigr\} \\
	    &\text{ }=\bigl\{ H(t)\in \Mj(H_0) \,\big|\, [H(t),N(H(t))]=0 \bigr\}
	    \end{aligned}
	    \end{equation}
	    is the set of all equilibrium points of \eqref{e:chTwo:lemmaEquilibriapoints} on $\jac_0(n)$ that are isospectral to $H_0$. Let
		    \[
			    \tilde{H} =
			    \D_1(a_1,a_2,a_3,a_4,\hdots,a_{n-2},a_{n-1})\in \jac_0(n).
		    \]
	    At this point it is crucial to recall that according to our definition Jacobi matrices have distinct eigenvalues.
	    We treat the case of $n$ even and $n$ odd separately:

	    \noindent\boxed{$n$ \text{ even}}\\
	    Consider the set of matrices
		\[
			\tilde{E} \Let \bigl\{\tilde{H}\in\jac_0(n) \,\big|\, [\tilde{H},N(\tilde{H})]=0 \bigr\}.
		\]
	    In view of \eqref{e:chTwo:K}, the only possibility for $\tilde{H}$ to lie in $\tilde{E}$ is if $a_i=0$ for all $i$ even and $a_i\neq 0,$ for all $i\in\{1,3,\hdots n-1\}$ with $a_i\neq a_j$ for all $i\neq j$, such that 
	    \begin{equation*}
		    \tilde{H}_{\tilde{E}}= \D_1(a_1,0,a_3,0,\hdots,0,a_{n-1}).
	    \end{equation*} 
	    Note that $\tilde{H}_{\tilde{E}}$ has the spectrum $\spectrum(\tilde{H}_{\tilde{E}})=\left\{\pm a_i\,\big|\, i\in\{1,3,\hdots n-1\}\right\}$ containing only distinct eigenvalues. Now $E$ as defined in \eqref{e:chTwo:def:E} is a subset of $\tilde{E}$ satisfying the isospectral conditions; it is the restriction of $\tilde{E}$ to the set of zero-diagonal Jacobi matrices isospectral to $H_0$, i.e., $E=\tilde{E}|_{\Mj(H_0)}$. 
	    Considering all the possible permutations of the $a_i$ for $i\in\{1,3,\hdots, n-1\}$, the set $E$ contains $\bigl(\frac{n}{2}\bigr)!$ equilibrium points on $\jac_0(n)$ that are isospectral to $H_0$. 

	    \noindent\boxed{$n$ \text{ odd}}\\
	    First of all, since $\tilde{H}=\D_1(a_1,a_2,a_3,a_4,\hdots,a_{n-2},a_{n-1})$ is a (zero-diagonal) Jacobi matrix, we need to evoke the fact \citep{compactManifold} that its spectrum has the form
	    \begin{equation} \label{e:chTwo:eigenvalues:Penskoi}
	     \spectrum(\tilde{H})=\{0,\pm \lambda_1, \pm \lambda_2, \hdots, \pm \lambda_{\frac{n-1}{2}}\}, \quad \text{where }\lambda_i\neq \lambda_j \text{ for all } i\neq j.
	    \end{equation}
	     Moreover, if we consider 
		\[
			\tilde{E}\Let \bigl\{\tilde{H}\in\jac_0(n) \,\big|\, [\tilde{H},N(\tilde{H})]=0 \bigr\},
		\]
	      in view of \eqref{e:chTwo:K}, $\tilde{H}$ must satisfy
	     \begin{equation} \label{e:chTwo:secondCond}
	      a_ia_{i+1}=0 \quad \text{for all }i=1,\hdots, n-2
	     \end{equation}
	      in order to lie in $\tilde{E}$. Furthermore, in view of \eqref{e:chTwo:eigenvalues:Penskoi} and \eqref{e:chTwo:secondCond} $\tilde{H}$ has to be a block-diagonal matrix containing $\frac{n-1}{2}$ blocks of the form
	      $\left(\begin{smallmatrix}
	       0 & a_{j} \\a_{j} & 0
	      \end{smallmatrix}\right)$ with $a_j\neq0$ and one $1\times 1$ block containing a zero, where the block entries are distinct (since the eigenvalues of of $\tilde{H}$ need to be distinct). Accordingly, there are $\frac{n+1}{2}$ possibilities to place the $1\times 1$ block in $\tilde{H}$. As above $E=\tilde{E}|_{\Mj(H_0)}$ and considering all possible permutations of the blocks we get that $E$ contains $\bigl( \frac{n+1}{2} \bigr)\bigl(\frac{n-1}{2}\bigr)!$ equilibrium points on $\jac_0(n)$ that are isospectral to $H_0$.
 \end{proof} 	
 
	    \begin{lemma} \label{lemma:sorting}
		Let $g\in\mathcal{C}(\R_{\geq 0},\R)$ and suppose that $\lim_{t\to\infty}g(t)$ exists. If
		\begin{equation*}
		 \lim_{t\to\infty} |g(t)|=\eta, \text{ for some }\eta>0 
		\end{equation*}
		and
		\begin{equation*}
		 \lim_{t\to\infty} \int_0^t g(s) \drv s = -\infty,
		\end{equation*}
		then
		\begin{equation*}
		 \lim_{t\to\infty} g(t) < 0.
		\end{equation*}
	    \end{lemma}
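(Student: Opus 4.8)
The plan is to set $L \Let \lim_{t\to\infty} g(t)$, which exists by hypothesis, and to prove $L < 0$ by pinning down the only two candidate values for $L$ and then eliminating the positive one.

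First I would exploit that a convergent real sequence has a convergent absolute value: since $g$ is real-valued and $g(t)\to L$, continuity of $|\cdot|$ gives $|g(t)|\to |L|$. Comparing this with the stated hypothesis $\lim_{t\to\infty}|g(t)| = \eta$ forces $|L| = \eta$. Because $\eta > 0$, we conclude $L \neq 0$, so that either $L = \eta > 0$ or $L = -\eta < 0$.

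Next I would rule out the case $L = \eta > 0$ by contradiction against the integral hypothesis. If $L = \eta > 0$, then by the definition of the limit there is a $T \geq 0$ with $g(t) > \tfrac{\eta}{2}$ for all $t \geq T$; splitting the integral and bounding the tail below yields, for $t \geq T$,
\[
 \int_0^t g(s)\,\drv s \;\geq\; \int_0^T g(s)\,\drv s + \tfrac{\eta}{2}\,(t - T),
\]
whose right-hand side tends to $+\infty$ as $t \to \infty$. This contradicts the hypothesis $\lim_{t\to\infty}\int_0^t g(s)\,\drv s = -\infty$. Hence the only surviving possibility is $L = -\eta < 0$, which is exactly the asserted conclusion.

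The argument is entirely elementary and I do not anticipate any genuine obstacle; the only point requiring a little care is the first step, where the simultaneous assumptions that $\lim g$ \emph{exists} and that $\lim |g| = \eta > 0$ are what collapse the limit to the two values $\pm\eta$ and exclude $0$, so that the sign of the integral can then decide between them. Without the standing assumption that $\lim_{t\to\infty} g(t)$ exists, the conclusion would fail (one could have $g$ oscillate with $|g|\to\eta$ while the integral still diverges to $-\infty$), so I would be sure to invoke that hypothesis explicitly.
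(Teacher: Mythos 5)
Your proof is correct and follows essentially the same route as the paper's: identify $\lim_{t\to\infty}g(t)=\pm\eta$ from the existence of the limit together with $\lim_{t\to\infty}|g(t)|=\eta>0$, then rule out the value $+\eta$ by bounding the tail of the integral below by $\tfrac{\eta}{2}(t-T)$ to contradict $\int_0^t g(s)\,\drv s\to-\infty$. The only cosmetic difference is that you invoke continuity of the absolute value directly, whereas the paper argues via the eventual constancy of the sign of $g$; the substance is identical.
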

	    \begin{proof}
	       First, note that $\lim_{t\to\infty}|g(t)|=\eta>0$ implies that there exists $T_0>0$ such that for all $t>T_0$, $g(t)\neq 0$. Therefore, by continuity, the sign of $g(t)$ is the same for all $t>T_0$. Suppose $\lim_{t\to\infty} g(t)\geq 0$. If the limit is equal to $0$, then its absolute value would have to converge to $0$ as well, which is a contradiction. Therefore, $\lim_{t\to\infty}g(t)>0.$ But then $g(t)=|g(t)|$ for all $t>T_0$ and $\lim_{t\to\infty}g(t)=\eta.$ 
	       Then, there exists $T_1>0$ such that for all $t>T_1$, $g(t)>\frac{\eta}{2}$. Without loss of generality we assume that $T_1>T_0$ (otherwise put $T_1'\Let\max\{T_0,T_1\}$). By continuity, $g$ is bounded on the interval $[0,T_1]$ by some constant $C$. Therefore,
	       \begin{equation*}
	        -\infty < -T_1C \leq \int_0^{T_1}g(s)\drv s \leq T_1 C < \infty.
	       \end{equation*}
		Now we see that for $t>T_1$
		\begin{equation*}
		 \int_0^t g(s) \drv s = \int_0^{T_1}g(s) \drv s + \int_{T_1}^t g(s) \drv s \geq -T_1 C + \int_{T_1}^t \frac{\eta}{2} \drv s \to \infty -T_1C = \infty
		\end{equation*}
		for $t\to\infty$. Therefore, $\lim_{t\to\infty} \int_0^t g(s) \drv s = \infty$ which is a contradiction.
	      \end{proof}

\begin{proof}[Proof of Theorem \ref{t:mainTheoremKvM}] 
	In view of Lemma \ref{lemma:chTwo:stayJacobi} it follows that the right-hand side of \eqref{e:chTwo:KvMflow} is a symmetric tridiagonal matrix with zero diagonal entries given by \eqref{e:lemma:RHS}. The fact that \eqref{e:chTwo:KvMflow} is isospectral is an immediate consequence of Proposition \ref{prop:ch1:isospectralflows}. Therefore, the flow of \eqref{e:chTwo:KvMflow} evolves on the set of zero-diagonal Jacobi matrices isospectral to $H_0$, i.e., $H(t)\in \Mj(H_0) $ for all $t\ge 0$. This settles the claim in \ref{cond:KvM:isospectral}.

	In order to show \ref{cond:KvM:eqpoints}, note that in view of \ref{cond:KvM:isospectral}, $H(t)\in \Mj(H_0)$ for all $t\ge 0$. Since $\Mj(H_0)$ is known to be a compact manifold \citep[Proposition 1.2]{compactManifold}, $H(t)$ exists for all $t\ge 0$. By Lemma \ref{lemma:chTwo:charEquilibira} we see that the set of equilibrium points $\bar{H}$ of \eqref{e:chTwo:KvMflow} is given by $\bigl\{\bar{H}\in\Mj(H_0)\,\big|\,[\bar{H},N(\bar{H})]=0\bigr \}$. To show that $H(t)$ approaches the set of equilibrium points, consider the function
	\begin{equation}\label{e:ch:Two:function}
	\begin{aligned}
		\Mj(H_0) \ni H \mapsto f(H) \Let -\frac{1}{4} \|H(t)-N(H(t))\|^2+\frac{1}{4} \|N(H(t))\|^2 \in \R.
	\end{aligned}
	\end{equation}
	According to Lemma \ref{lemma:chTwo:symFunction},
	\begin{equation} 
	 \frac{\drv}{\drv t}f(H(t))=\|[H(t),N(H(t))]\|^2\geq 0.
	\end{equation}
	We invoke the Krasovskij-LaSalle's Invariance Principle \citep[Theorem 4.4]{khalil}, \citep[p.\ 178]{vidyasagar}: First, we define the set
	\begin{align*}
	E  \Let&  \left\{ H \in \Mj(H_0)\,\Big|\,[H,N(H)]=0 \right\}.
	\end{align*}
	By Lemma \ref{lemma:chTwo:charEquilibira}, $E$ coincides with the set of all equilibrium points of \eqref{e:chTwo:KvMflow}. Therefore, $E$ is an invariant set with respect to \eqref{e:chTwo:KvMflow}. Second, recall that $\Mj(H_0)$ is a compact set. Therefore, by the Krasovskij-LaSalle's Invariance Principle, every solution $(H(t))_{t\geq 0}$ starting in $\Mj(H_0)$ approaches the set of equilibrium points $E$ asymptotically, which proves the claim in \ref{cond:KvM:eqpoints}.

	To show property \ref{cond:KvM:convergence} note that we have already shown in part \ref{cond:KvM:eqpoints} that every solution of \eqref{e:chTwo:KvMflow} approaches the set of equilibrium points asymptotically. However, according to Lemma \ref{lemma:chTwo:numberEquilibriaPoints}, the number of equilibrium points is finite. By continuity of trajectories, therefore, $(H(t))_{t\ge 0}$ converges to a single equilibrium point, i.e., $\lim_{t\to\infty}H(t)$ exists, which proves the claim in \ref{cond:KvM:convergence}.
	
	To prove \ref{cond:KvM:sorting}, consider first the o.d.e.\
	\begin{equation} \label{eq:proof:sorting}
	 \frac{\drv x(t)}{\drv t} = x(t) g(t), \qquad x(0)=x_0,
	\end{equation}
	where \(x(t)\in\R\), $g\in\mathcal{C}(\R_{\geq 0},\R)$. Suppose that $x_0\neq 0$. The unique solution (\citep{khalil}) to \eqref{eq:proof:sorting} is given by 
	\begin{equation*}
	 x(t) = x_0 \exp\left( \int_0^t g(s) \drv s \right),\quad t\ge 0,
	\end{equation*}
	and it follows at once that 
	\begin{equation}
	\label{eq:proof:sorting:asymptote}
		\lim_{t\to\infty}x(t) = 0 \quad\text{implies}\quad \lim_{t\to\infty} \int_0^t g(s) \drv s = -\infty.
	\end{equation}
	We consider the components of the o.d.e.\ \eqref{e:chTwo:KvMflow}:
	\begin{align*}
	 \frac{\drv a_1(t)}{\drv t} &= -a_1(t) a_2^2(t),\\
	 \frac{\drv a_i(t)}{\drv t} &= a_i(t) \left(a_{i-1}^2(t)-a_{i+1}^2(t)\right),\quad i = 2, \ldots, n-2,\\
	 \frac{\drv a_{n-1}(t)}{\drv t} &= a_{n-1}(t)a_{n-2}^2(t),
	\end{align*}
	and distinguish two different cases depending on the parity of $n$:

	\noindent\boxed{$n$ \text{ even}}\\
	For each \(i = 1, \ldots, \frac{n}{2}-1\), define the function $g_{2i}\in \mathcal{C}(\R_{\geq 0},\R)$ by $g_{2i}(t) \Let a_{2i-1}^2(t)-a_{2i+1}^2(t)$. With the definition of \(g_{2i}\), the even-numbered components of the o.d.e.\ \eqref{e:chTwo:KvMflow} can be represented as
	\begin{equation*}
	 \frac{\drv a_{2i}(t)}{\drv t}=a_{2i}(t)\left(a_{2i-1}^2(t)-a_{2i+1}^2(t)  \right) =a_{2i}(t) g_{2i}(t). 
	\end{equation*}
	This o.d.e.\ is of the form \eqref{eq:proof:sorting}, and by Lemma \ref{lemma:chTwo:numberEquilibriaPoints} and properties \eqref{cond:KvM:eqpoints} and \eqref{cond:KvM:convergence} of Theorem \ref{t:mainTheoremKvM}, it follows that $\lim_{t\to\infty}a_{2i}(t)$ exists and is equal to zero. By \eqref{eq:proof:sorting:asymptote}, 
	\begin{equation}\label{e:proof:sorting:n_even_1}
	 \lim_{t\to \infty}\int_0^t g_{2i}(s) \drv s = -\infty.
	\end{equation}
	Independently of the preceding steps, recall that the eigenvalues of any Jacobi matrix are distinct. From the definition of $g_{2i}$ it now follows that
	\begin{equation}\label{e:proof:sorting:n_even_2}
	 \text{there exists \(\eta_{2i} > 0\) such that} \quad \lim_{t\to \infty} |g_{2i}(t)|  = \eta_{2i}.
	\end{equation}
	In view of \eqref{e:proof:sorting:n_even_1} and \eqref{e:proof:sorting:n_even_2}, Lemma \ref{lemma:sorting} and property \ref{cond:KvM:convergence} of Theorem \ref{t:mainTheoremKvM} lead to
	\begin{equation}
	 \lim_{t\to\infty}a_{2i-1}^2(t) < \lim_{t\to\infty}a_{2i+1}^2(t)\quad \text{for all }i=1,\hdots,\tfrac{n}{2}-1.
	\end{equation}
	
	\noindent\boxed{$n$ \text{ odd}}\\
	We introduce the function $f(t) \Let a_{n-1}^2(t)$, \(t\ge 0\), where \(a_{n-1}(\cdot)\) is the solution of the final component of \eqref{e:chTwo:KvMflow}. The derivative of \(f\) given by $\frac{\drv f(t)}{\drv t} = 2a_{n-1}^2(t) a_{n-2}^2(t) \ge 0$ shows that $f$ is monotonically non-decreasing. Since \(a_{n-1}(0) \neq 0\), it follows from the fact that \(f\) is monotonically non-decreasing, that $\lim_{t\to\infty} a_{n-1}(t)\neq0$. In view of Lemma \ref{lemma:chTwo:numberEquilibriaPoints}, and properties \eqref{cond:KvM:eqpoints} and \eqref{cond:KvM:convergence} of Theorem \ref{t:mainTheoremKvM}, it follows that $\lim_{t\to\infty}a_{2i-1}(t)$ exists and is equal to zero for all $i=1,\hdots \frac{n-1}{2}$. As in the case of \(n\) even, we define, for each \(i = 2, \ldots, \frac{n-1}{2}\), a function $g_{2i-1}\in \mathcal{C}(\R_{\geq 0},\R)$ by $g_{2i-1}(t)\Let a_{2i-2}^2(t)-a_{2i}^2(t)$. With this definition of \(g_{2i-1}\), we see that the odd-
numbered components (greater than one) of the o.d.e.\
 \eqref{e:chTwo:KvMflow} can be represented as
	\begin{equation*}
	 \frac{\drv a_{2i-1}(t)}{\drv t}=a_{2i-1}(t)\left(a_{2i-2}^2(t)-a_{2i}^2(t)  \right) =a_{2i-1}(t) g_{2i-1}(t). 
	\end{equation*}
	This o.d.e.\ is of the form \eqref{eq:proof:sorting}, and since we know that $\lim_{t\to\infty}a_{2i-1}(t)=0$, it follows from \eqref{eq:proof:sorting:asymptote} that
	\begin{equation}\label{e:proof:sorting:n_even_1b}
	 \lim_{t\to \infty}\int_0^t g_{2i-1}(s) \drv s = -\infty.
	\end{equation}
	Since the eigenvalues of any Jacobi matrix are distinct, by definition of $g_{2i-1}$ we see that
	\begin{equation}\label{e:proof:sorting:n_even_2b}
	 \text{there exists \(\eta_{2i-1} > 0\) such that}\quad \lim_{t\to \infty} |g_{2i-1}(t)|  = \eta_{2i-1}.
	\end{equation}
	In view of \eqref{e:proof:sorting:n_even_1b} and \eqref{e:proof:sorting:n_even_2b}, Lemma \ref{lemma:sorting} and property \ref{cond:KvM:convergence} of Theorem \ref{t:mainTheoremKvM} lead to
	\begin{equation}
	 \lim_{t\to\infty}a_{2i-2}^2(t) < \lim_{t\to\infty}a_{2i}^2(t), \quad \text{for all }i=2,\hdots,\frac{n-1}{2}.
	\end{equation}

	Finally, note that independently of the parity of $n$, for each component of the o.d.e.\ \eqref{e:chTwo:KvMflow}, zero is always an equilibrium point. Therefore, if \(a_i(0)\neq 0\), then \(a_i(\cdot)\) cannot take the value \(0\), and this in turn implies that if $\lim_{t\to\infty}a_i(t)\neq 0$, then $\sgn(a_i(0))=\sgn(\lim_{t\to\infty}a_i(t))$ for all $i=1,\hdots,n-1$. This completes the proof. 
\end{proof}

	\section{Examples}
	\label{s:examples}
	We illustrate Theorem \ref{t:mainTheoremKvM} with three numerical examples. We solve the o.d.e.\ \eqref{e:chTwo:KvMflow} numerically and plot the upper-diagonal entries of $H(t)$ as functions of time for a chosen initial condition. The figures show that the solution of \eqref{e:chTwo:KvMflow} converges rather quickly to an equilibrium.
	
	\begin{example} \label{ex:chTwo:example4}
	We start with a $4\times 4$ example. Consider an initial condition 
	\begin{equation} \label{eq:chTwo:ex4jac_ic}
	H(0)=H_0=
	\D_1(5,-6,-2)
	\end{equation}
	with the spectrum 
	\begin{align*}
		\spectrum(H_0)=\bigl\{  & \pm 1.26,\pm 7.96\bigr\}.
	\end{align*}
	By solving the o.d.e.\ \eqref{e:chTwo:KvMflow} numerically with this initial condition we see again that the transient behavior vanishes well before 1 second of simulation. At $T=1s$ we have
	\begin{align*}
	H(T)&= 
	\D_1(1.26,0,-7.96).
	\end{align*}
	Figure \ref{Fig:chTwo:ex4_jac} shows the evolution of the super-diagonal components of $H(t)$ against time $t$.
	\begin{figure}[!htb]
	    \centering
	    \includegraphics[width=10cm]{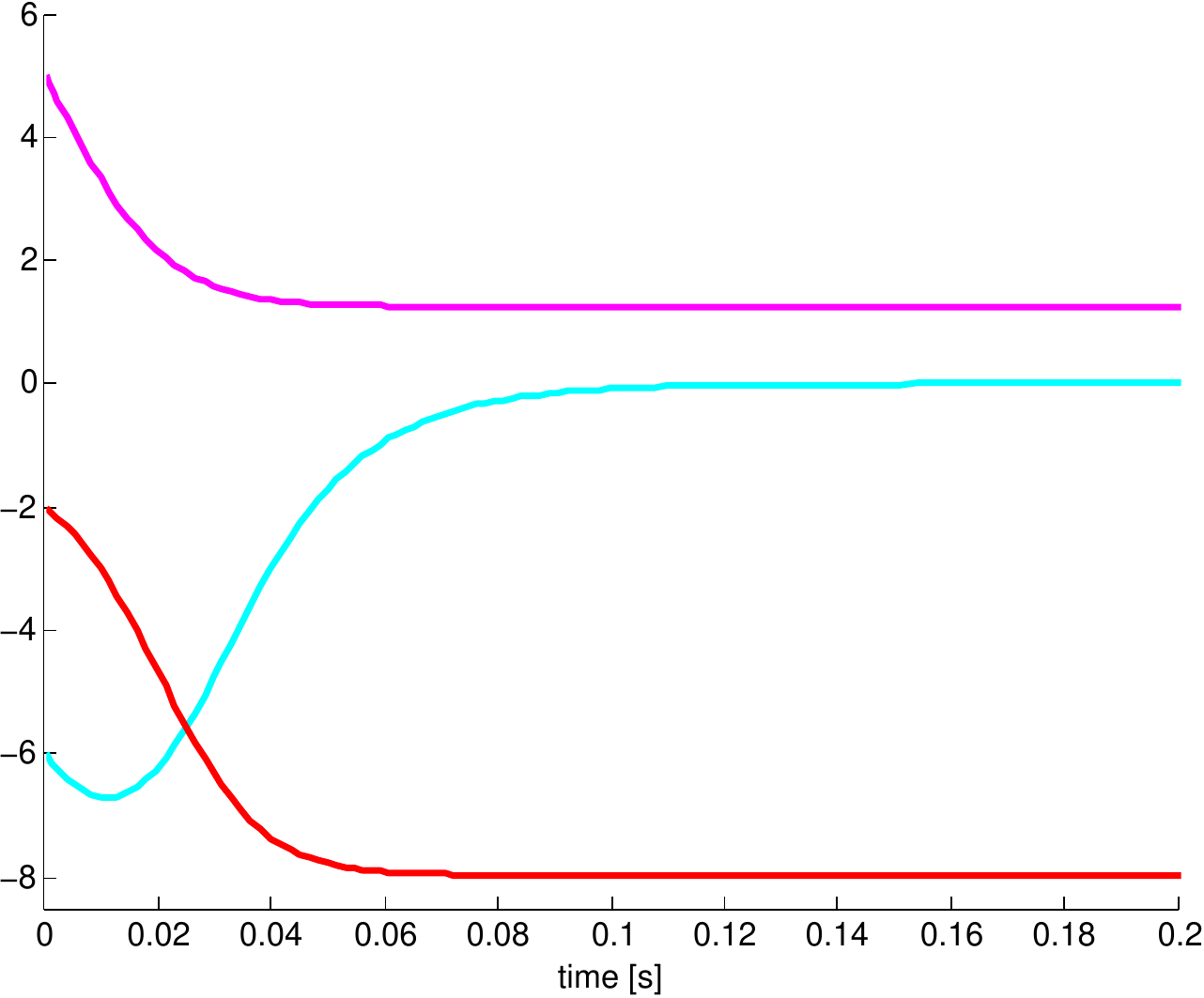}
	    \caption{Flow of o.d.e.\ \eqref{e:chTwo:KvMflow} for the initial condition \eqref{eq:chTwo:ex4jac_ic}.}
	    \label{Fig:chTwo:ex4_jac}
	\end{figure}
	\end{example}
      \begin{example} \label{ex:chTwo:example10}
	Consider the following initial condition of size \(10 \times 10\) 
	\begin{equation} \label{eq:ch2:ex10_ic}
	H(0)=H_0=
	\D_1(-3,10,1,-2,-6,-11,5,6,12),
	\end{equation}
	having the spectrum
	$\spectrum(H_0)=\left \{  \pm 0.21, \pm 2.71, \pm 10.48, \pm 12.34, \pm 14.36 \right\}$. By solving the o.d.e.\ \eqref{e:chTwo:KvMflow} numerically, we see that the transient behavior vanishes well before 1 second of simulation. At $T=1s$ we have
	\[
	H(T)= 
	\D_1(-0.21,0,2.71,0,-10.48,0,12.34,0,14.36).
	\]
	Figure \ref{Fig:ch2:ex10} shows the evolution of the super-diagonal components of $H(t)$ against time $t$.
	\begin{figure}[!htb]
	    \centering
	    \includegraphics[width=10cm]{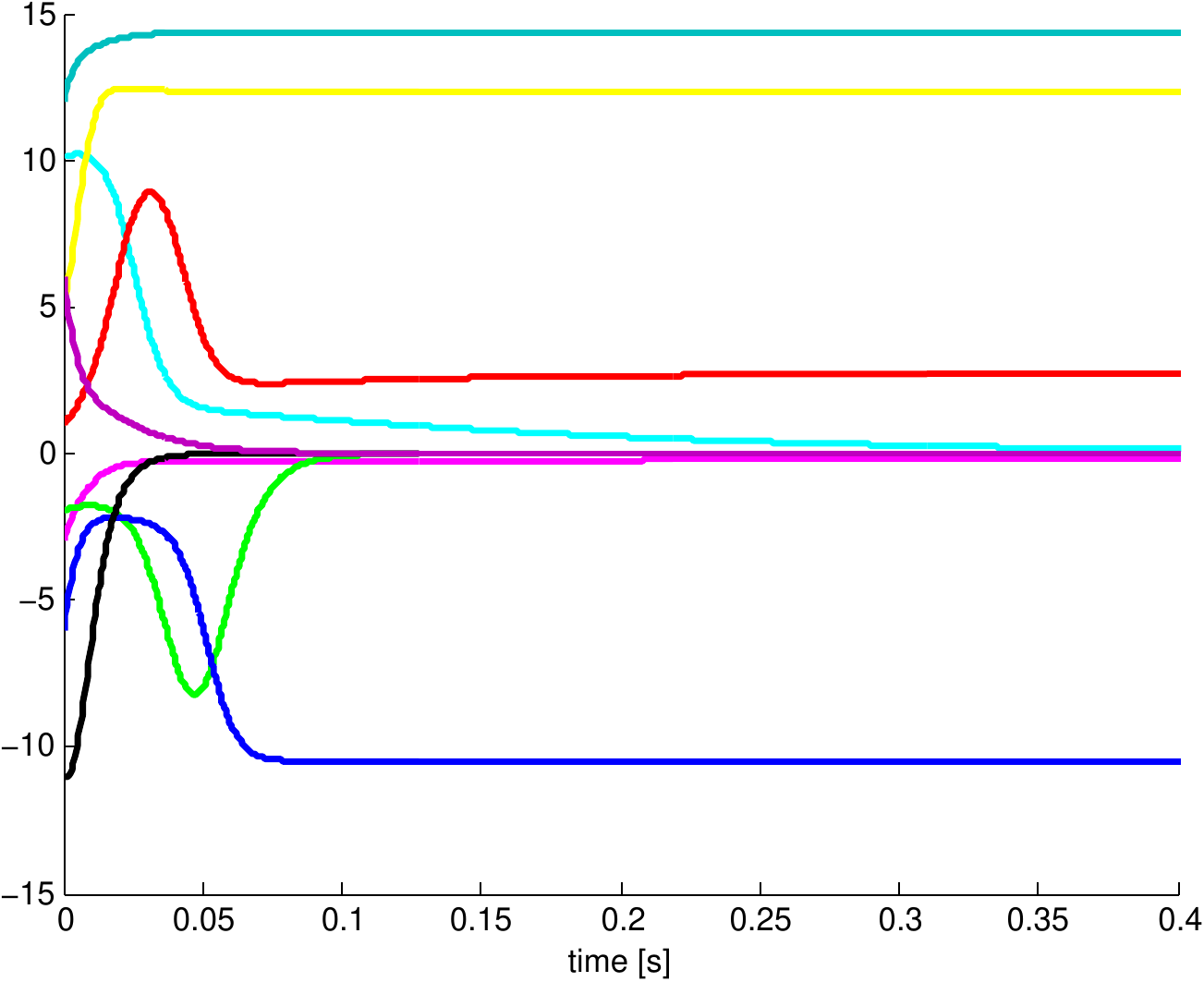}
	    \caption{Flow of o.d.e.\ \eqref{e:chTwo:KvMflow} for the initial condition \eqref{eq:ch2:ex10_ic}.}
	    \label{Fig:ch2:ex10}
	\end{figure}
	\end{example}
	\begin{example} \label{ex:chTwo:example29}
	Consider the following initial condition of size \(29 \times 29\) 
	\begin{equation} \label{eq:ch2:ex29_ic}
	\begin{aligned}
	H(0)=H_0&=
	\D_1(-6,7,-8,2,-13,7,-12,7,-2,9,2,-4,2,\\
	&\hspace{12mm} 4,6,-15,-7,11,-7, 9,9,15,1,5,-3,11,-1,-3),
	\end{aligned}
	\end{equation}
	having the spectrum
	$\spectrum(H_0)=\left \{ 0, \pm 2.81, \pm 2.98, \pm 4.17, \pm 4.66, \pm 4.84, \pm 6.26, \pm 9.29,\right.$\\ 
	$\left.\pm 10.84, \pm 11.53, \pm 11.83, \pm 12.48, \pm 17.11, \pm 17.98, \pm 18.85 \right\}$. By solving the o.d.e.\ \eqref{e:chTwo:KvMflow} numerically, we see that the transient behavior vanishes well before 1 second of simulation. At $T=1s$ we have
	\begin{align*}
	H(T)&= \D_1(0, 2.81, 0, 2.98, 0, 4.17, 0, 4.66, 0, 4.84, 0, -6.26, 0, 9.29, 0, -10.84, 0,\\ 
	&\hspace{16mm}11.53, 0, 11.83, 0, 12.48, 0, 17.11, 0, 17.98, 0, -18.85).
	\end{align*}
	Figure \ref{Fig:ch2:ex10} shows the evolution of the super-diagonal components of $H(t)$ against time $t$.
	\begin{figure}[!htb]
	    \centering
	    \includegraphics[width=10cm]{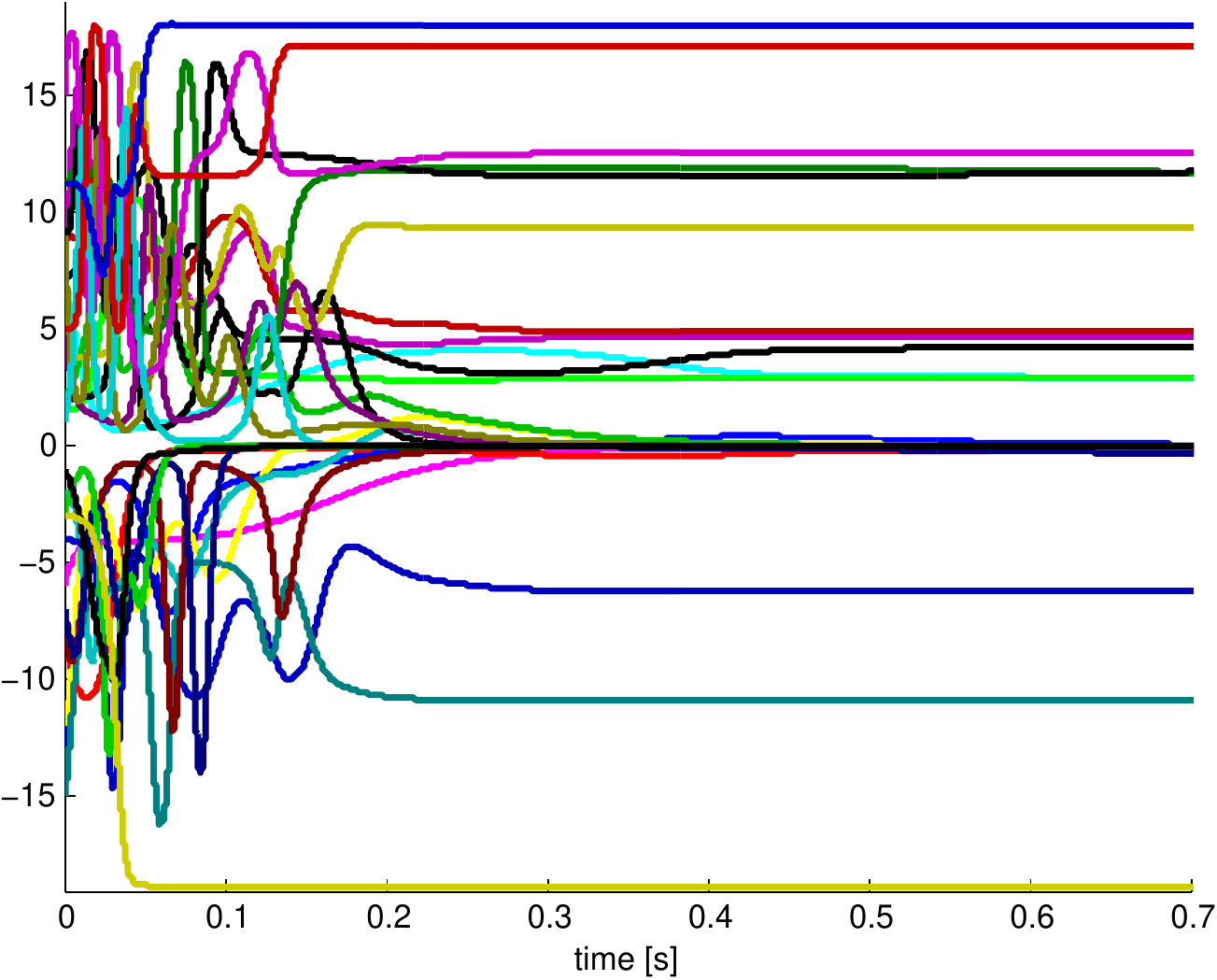}
	    \caption{Flow of o.d.e.\ \eqref{e:chTwo:KvMflow} for the initial condition \eqref{eq:ch2:ex29_ic}.}
	    \label{Fig:ch2:ex29}
	\end{figure}
	\end{example}

	\section{Conclusions and Future Direction}
	\label{s:concl}
	We presented a matrix-valued isospectral ordinary differential equation that asymptotically block-diagonalizes a finite-dimensional zero-diagonal Jacobi matrix 
	employed as its initial condition. We demonstrated that this o.d.e.\ can be represented as a double bracket equation, thus establishing a connection to \citep{ref:Brockett-91}, and we have proved certain new key properties of this o.d.e.\ by system-theoretic techniques. In particular that the limit is block-diagonal and the blocks of the limit point are sorted by increasing magnitude of the corresponding eigenvalue. 

		The domain of the o.d.e.\ \eqref{e:chTwo:KvMflow} can be expanded to the set of real symmetric matrices $\sym(n)$. Since $\Ms(H_0)$ for $H_0\in\sym(n)$ is again a compact manifold \citep{helmke}, assertions \ref{cond:KvM:isospectral} and \ref{cond:KvM:eqpoints} of Theorem \ref{t:mainTheoremKvM} hold also for the symmetric case, and the proof proceeds analogously. Extensive simulations lead us to conjecture that the solutions converge asymptotically to block diagonal matrices, as in the case of zero-diagonal Jacobi matrices employed as initial conditions. However, a proof for this conjecture is still an open problem; the primary technical difficulty arises from the fact that in contrast to the case of zero-diagonal Jacobi matrices, in this case there exist infinitely many equilibrium points.


		\bibliographystyle{agsm}
		\bibliography{ref}

\bigskip

\end{document}